\newcommand{\fg}{\mathfrak g}
\newcommand{\fs}{\mathfrak s}
\newcommand{\fn}{\mathfrak n}
\newcommand{\fm}{\mathfrak m}
\newcommand{\ad}{{\rm ad}}
\newcommand{\C}{\mathbb{C}}
\newcommand{\R}{\mathbb{R}}
\newcommand{\Imm}{\mathrm{Im}\,}
\theoremstyle{plain}
\newcounter{theo}
\newtheorem{thm}[theo]{Theorem}
\newtheorem{prop}[theo]{Proposition}
\newtheorem{lem}[theo]{Lemma}
\theoremstyle{definition}
\newtheorem{rem}[theo]{Remark}
\newtheorem{df}[theo]{Definition}
\newenvironment{pf}{{\noindent\bf Proof. }}{\hfill $\square$\medskip}
\begin{document}
\title[Construction of  counterexamples to the $2-$jet determination Chern-Moser Theorem in higher codimension]{Construction of  counterexamples to the $2-$jet determination Chern-Moser Theorem in higher codimension}
\author{Jan Gregorovi\v c and Francine Meylan}
\address{J. G., Faculty of Science, University of Hradec Kr\'alov\'e, Rokitansk\'eho 62, Hradec Kr\'alov\'e 50003, Czech Republic and Faculty of Mathematics, University of Vienna, Oskar Morgenstern Platz 1, 1090 Wien, Austria \newline
F. M., Department of Mathematics, University of Fribourg, CH 1700 Perolles, Fribourg}
 \email{jan.gregorovic@seznam.cz, francine.meylan@unifr.ch}
\subjclass[2010]{32V40, 32V05,  53C30, 58K70, 22E46}
 \thanks{J. G. gratefully acknowledges support via Czech Science Foundation (project no. 19-14466Y) and partial support via FWF}

\maketitle

\begin{abstract}
We  first construct a counterexample of a  generic  quadratic  submanifold of  codimension $5$ in $\Bbb C^9$ which  admits a real analytic 
infinitesimal CR automorphism with homogeneous polynomial coefficients of degree $4.$ This example also resolves a question in the Tanaka prolongation theory that was open for more than 50 years.
 Then  we give  sufficient conditions to generate more  counterexamples to the  $2-$jet determination Chern-Moser Theorem in higher codimension. In particular, we construct examples of generic quadratic submanifolds with jet determination of arbitrarily high order.
\end{abstract}

\section{Introduction}
Let $M$ be a real-analytic  submanifold of $\C^N$ of codimension $d.$  Consider the set of germs of biholomorphisms $F$  at a point $p\in M$  such that $F(M)\subset M$. 
By the   work of Cartan \cite{Ca}, Tanaka \cite{Ta} and  Chern and Moser \cite{CM},  if the  codimension $d=1,$  every such $F$ is uniquely determined by its  first and second derivatives  at $p  $  provided that its  Levi map  at $p$ is non-degenerate. 

%For a real hypersurface in complex dimension 2, H. Poincar\'e initiated the study of the stability group  by looking at Taylor series expansion: the condition $F(M)\subset M$ means that %$\rho(F(z,w))_{|M}=0$, where $\rho$ is a defining function of $M$, and this equation gives some constraints on the Taylor series coefficients of $F$. The process was carried out much later %in a significant manner by J.K. Moser for Levi non-degenerate hypersurfaces, to obtain the following $2$-jet determination statement:

\begin{thm} \cite{CM}
Let $M$ be a real-analytic hypersurface through a point $p$ in $\C^N$ with non-degenerate Levi form at $p$. Let $F$, $G$ be two germs of biholomorphic maps preserving $M$. Then, if $F$ and $G$ have the same 2-jets at $p$, they coincide.
\end{thm}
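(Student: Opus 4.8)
The plan is to reduce the two-map statement to a rigidity statement for the isotropy group and then to exploit the Chern--Moser normal form. First I would take $p=0$ without loss of generality. Since $F$ and $G$ have the same $0$-jet, $F(0)=G(0)$, so $H:=G^{-1}\circ F$ is a well-defined germ of biholomorphism with $H(M)\subset M$ and $H(0)=0$; moreover, because the $2$-jet of a composition depends only on the $2$-jets of the factors, $H$ has the same $2$-jet at $0$ as $\id$. Thus the theorem is equivalent to the assertion that a germ of biholomorphism preserving $M$, fixing $0$, and having trivial $2$-jet there must be the identity.

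Next, using the non-degeneracy of the Levi form at $0$, I would bring $M$ into Chern--Moser normal form: in suitable coordinates $(z,w)\in\C^{N-1}\times\C$,
\[
\Imm w \;=\; \langle z,\bar z\rangle \;+\; \sum_{k,l\ge 2} N_{k,l}(z,\bar z,\Ree w),
\]
where $\langle\cdot,\cdot\rangle$ is the non-degenerate Hermitian form of the Levi map and the components satisfy the trace conditions $\mathrm{tr}\,N_{2,2}=\mathrm{tr}^2 N_{2,3}=\mathrm{tr}^3 N_{3,3}=0$. The construction proceeds order by order in homogeneity, and non-degeneracy is exactly what makes the relevant trace (Laplace-type) operators invertible on the complement of the normalized terms, so that at each order the normalizing change of coordinates exists and is unique up to the ambiguity coming from the model hyperquadric.

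The heart of the argument is the uniqueness of the normal form: the only biholomorphisms carrying one normal form to another are the elements of the isotropy group $\Aut_0(Q)$ of the model $Q:\Imm w=\langle z,\bar z\rangle$. This group is finite-dimensional and admits an explicit rational parametrization by $(\lambda,U,a,r)$, namely a real dilation $\lambda>0$, a unitary transformation $U$ of the Hermitian form, a vector $a\in\C^{N-1}$, and a real number $r$. Since $H$ preserves $M$, which is in normal form, it carries this normal form to itself and so $H\in\Aut_0(Q)$. The final step is to read off the parameters from the jet: the $1$-jet of such an element determines $(\lambda,U)$ and its $2$-jet determines $(a,r)$. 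As $H$ has trivial $2$-jet, all four parameters are trivial, whence $H=\id$, which is the claim.

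The main obstacle is establishing the normal form together with its uniqueness up to $\Aut_0(Q)$; equivalently, in the Lie-theoretic language relevant to the rest of the paper, showing that the Tanaka prolongation of the symbol (the Heisenberg algebra with the non-degenerate Levi bracket) together with its degree-zero part truncates at degree $2$. It is precisely this finiteness --- which fails for the higher-codimension symbols studied later --- that forces automorphisms to be captured by their $2$-jets, and the explicit recursion together with the invertibility of the trace operators constitutes the technical core.
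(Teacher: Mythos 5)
The paper does not prove this statement; it is quoted verbatim from Chern--Moser \cite{CM} as the classical starting point, so there is no internal proof to compare against. Your sketch is a faithful outline of the original Chern--Moser argument (reduction to $H=G^{-1}\circ F$ with trivial $2$-jet, normal form with the trace conditions, uniqueness of the normal form up to the isotropy of the hyperquadric, and extraction of the parameters $(\lambda,U,a,r)$ from the $2$-jet), and the closing remark correctly identifies why the argument breaks in higher codimension, which is the subject of the paper. One small imprecision: from $H$ preserving a manifold in normal form one does not conclude that $H$ literally lies in $\Aut_0(Q)$; the correct statement is that the $2$-jet map from the stability group of $M$ at $0$ into the isotropy group of the model hyperquadric is injective, i.e.\ $H$ is \emph{parametrized} by, not equal to, an element of $\Aut_0(Q)$ --- but this is exactly what is needed to conclude $H=\id$ from the triviality of its $2$-jet, so the argument stands.
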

Note that the result becomes false without any hypothesis on the Levi form (See for instance  \cite{bl-me1}). A generalization of this Theorem to  real-analytic submanifolds  $M$ of higher codimension $d>1$ has been proposed by Beloshapka in \cite{Be2} (and quoted many times by several authors)  under the hypothesis that  $M$ is  Levi generating (or equivalently of finite type with $2$ the only H\"ormander number) with  non-degenerate Levi map. Unfortunately, an error has been discovered  and explained in \cite{bl-me1}.

 In the first part of the paper, inspired by the technics developed in \cite{KMZ} and \cite{Bl-Me}, we  construct an example of a  generic (Levi generating  with non-degenerate Levi map)   quadratic  submanifold  that admits  an element in its stability group which has the same $2-$jet as the identity map but is not the identity map. In addition, this example is Levi non-degenerate in the sense of Tumanov. We  point out that if $M$ is strictly pseudoconvex, that is Levi non-degenerate  in the sense of Tumanov with a positivity condition,  then by a recent result of Tumanov\cite{tu3}, the  $2$-jet determination result holds in any codimension.
 
Moreover, this example solves an open question in the Tanaka prolongation theory that is related to  the weighted order of infinitesimal automorphisms. Roughly speaking, the question is to understand how large the positive depth of the Tanaka prolongation of a 
negatively graded nilpotent Lie algebra of  finite depth can be. (See \cite{DZ} for a precise statement).  For more than 50 years, only examples with positive weighted order less {or equal} in absolute value than minimum negative weighted order were known. Very recently, a preprint of Doubrov and Zelenko \cite{DZ}  has appeared with a counterexample. Let us emphasize that the example in the first part is also a counterexample to this open problem, and that  it was announced in the preprint of the second author \cite{Me} shortly before the counterexample of Doubrov and Zelenko.

In the second part of the paper, we use  different technics than in the first part. More precisely, we consider the Tanaka prolongation theory to obtain sufficient conditions to generate new examples, specifying the number of jets needed to determine  the given biholomorphisms. In particular, we obtain a  second counterexample of a generic  quadratic  submanifold of codimension $4$  in $\Bbb C^{10}$ which  admits a real analytic 
infinitesimal CR automorphism with homogeneous polynomial coefficients of degree $3.$ With the help of these two counterexamples, we construct a family of generic quadratic submanifolds  with jet determination of arbitrarily high order.  We point out that in codimension two,  $2$-jet determination holds \cite{Bl-Me}. The authors do not know if $2$-jet determination  also holds in codimension $3.$

%\textcolor{red}{We also prove  the following Theorem
%\begin{thm}
%For any codimension $k>5$, there is real submanifold $M$ in $\C^{2k-1}$ of codimension $k$ such that $4-$jet determination  hold for germs of  biholomorphisms sending  $M$ to $M.$
%\end{thm}}
%In the case of  $M$ being a  real-analytic submanifold of codimension $d>1$ in $\C^N$,  Beloshapka in \cite{Be2} considers  seems the most relevant for us in the approach by model %submanifolds. More precisely, we present a detailed explanation of the first part of his original  proof in \cite{Be2} which shows   finite jet   determination. Nevertheless, we explain why, %following his proof, we can not conclude $2$-jet determination in the general case. However we obtain $2$-jet determination (using the same notion of nondegeneracy) in two special cases: in %$\Bbb C^4$ without the assumption of smoothness through the technics of analytic discs (see \cite {be-bl-me}), and for  codimension $2$  in $\Bbb C^N$ through the technics developed in %\cite{KMZ} (see  \cite{Bl-Me}).

We  also  mention that
finite jet determination problems for   submanifolds has attracted  much attention.
 We  refer  in particular  to the papers of Zaitsev \cite{Za}, Baouendi, Ebenfelt and Rothschild \cite{BER1}, Baouendi, Mir and Rothschild \cite{BMR}, Ebenfelt, Lamel and Zaitsev \cite{eb-la-za},  Lamel and Mir \cite{la-mi}, Juhlin \cite{ju}, Juhlin and Lamel \cite{ju-la}, Mir and Zaitsev \cite{mi-za} in the real analytic case, Ebenfelt \cite{eb}, Ebenfelt and Lamel \cite{eb-la}, Kim and Zaitsev \cite{ki-za}, Kolar, the  author and Zaitsev \cite{KMZ}  in the
$ \mathcal{C}^\infty$ case,  Bertrand and Blanc-Centi \cite{be-bl}, Bertrand, Blanc-Centi  and the second  author \cite {be-bl-me}, Bertrand and the second author \cite{be-me}, Tumanov \cite{tu3}  in the finitely smooth case.

The paper is organized as follows: In Section 2, we give the construction of  the counterexample, as it was described in \cite{Me}. In Section 3, we recall the necessary definitions and properties  needed in the sequel. In particular, we recall the definitions of non-degenerate Levi Tanaka Lie algebras and  their Tanaka prolongations. In Section 4, we state and prove the theorem generating counterexamples. (See Proposition \ref{int} and Theorem \ref{const}). In Section 5, we construct examples of generic quadratic submanifolds  with jet determination of arbitrarily high order.(See Theorem \ref{feriel1}).
\bigskip

\section{The  Example}
%\subsection {Properties}
Let $M \subseteq \C^{9}$ be the  real submanifold of (real) codimension $5$ through $0$ given  in the coordinates $(z,w)=(z_1, \dots,z_4, w_1, \dots, w_5)  \in \C^{9},$ by 
\begin{equation}\label{Fe}
\begin{cases}
\Imm w_1= P_1 (z, \bar z)=z_1 \overline{ z_2} +  z_2 \overline{ z_1} \\
\Imm w_2=P_2 (z, \bar z)= -i z_1 \overline{ z_2} + i z_2 \overline{ z_1} \\
 \Imm w_3=P_3 (z, \bar z) = z_3 \overline{ z_2} + z_4 \overline{ z_1} + z_2 \overline{ z_3} + z_1 \overline{ z_4} \\
 \Imm w_4=P_4 (z, \bar z)= z_1 \overline{ z_1}  \\
 \Imm w_5=P_5 (z, \bar z) =z_2 \overline{ z_2}  \\
\end{cases}
\end{equation}

The matrices corresponding to the $P_i's$ are 
$$A_1=\left(\begin{array}{cccc}0&1&0&0\\1&0&0&0\\0&0&0&0\\0&0&0&0\end{array}\right)\qquad 
A_2=\left(\begin{array}{cccc}0&-i&0&0\\i&0&0&0\\0&0&0&0\\0&0&0&0\end{array}\right)\qquad 
A_3=\left(\begin{array}{cccc}0&0&0&1\\0&0&1&0\\0&1&0&0\\1&0&0&0\end{array}\right)\qquad $$
$$
A_4=\left(\begin{array}{cccc}1&0&0&0\\0&0&0&0\\0&0&0&0\\0&0&0&0\end{array}\right)\qquad 
A_5=\left(\begin{array}{cccc}0&0&0&0\\0&1&0&0\\0&0&0&0\\0&0&0&0\end{array}\right)\qquad 
$$

\begin{lem} The following holds:
\begin{enumerate}
\item the $A_i's$ are linearly independent, \\
\item the $A_i's$ satisfy the condition of Tumanov, that is, there is $c \in \Bbb R^d$ such that $\det{\sum c_jA_j}\ne 0.$
\end{enumerate}
\end{lem}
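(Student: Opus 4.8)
The plan is to treat both statements as direct computations on the five explicit $4\times 4$ matrices, the only structural input being where each coefficient is placed. First I would assemble the general real linear combination
$$M(c) := \sum_{j=1}^{5} c_j A_j = \begin{pmatrix} c_4 & c_1 - i c_2 & 0 & c_3 \\ c_1 + i c_2 & c_5 & c_3 & 0 \\ 0 & c_3 & 0 & 0 \\ c_3 & 0 & 0 & 0 \end{pmatrix},$$
where $c = (c_1,\dots,c_5)\in\R^5$. The point is that each $c_j$ occupies its own position, which is exactly what makes both parts transparent.

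For part (1), I would set $M(c)=0$ and read off entries: positions $(1,1)$ and $(2,2)$ give $c_4=c_5=0$, position $(1,4)$ gives $c_3=0$, and the conjugate pair of positions $(1,2)$, $(2,1)$ gives $c_1 - ic_2 = 0$ and $c_1 + ic_2 = 0$. Since the $c_j$ are real, adding and subtracting these last two relations forces $c_1 = c_2 = 0$, so all coefficients vanish and the $A_j$ are linearly independent over $\R$.

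For part (2), I would compute $\det M(c)$ by exploiting the sparsity inherited from $A_3$: expanding along the fourth row, whose only nonzero entry is $c_3$ at position $(4,1)$, and then along the last row of the resulting $3\times 3$ minor, collapses the determinant to
$$\det M(c) = c_3^{\,4}.$$
Hence Tumanov's condition holds for any $c$ with $c_3 \neq 0$; the cleanest witness is $c=(0,0,1,0,0)$, i.e.\ the single reversal matrix $A_3$, for which $\det A_3 = 1$.

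There is essentially no obstacle here. Linear independence is forced by the distinct placement of the five coefficients, and Levi nondegeneracy in the sense of Tumanov is already exhibited by $A_3$ alone. The only point deserving a moment's care is the use of reality in part (1): the two conjugate-paired entries $c_1 \mp i c_2$ must be combined, rather than handled separately, to exclude complex cancellation.
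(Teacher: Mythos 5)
Your proof is correct: the paper states this lemma without proof, and your direct verification is exactly the computation the authors leave implicit. Both steps check out --- the entrywise reading of $\sum c_jA_j$ (using reality of the $c_j$ on the conjugate pair $c_1\mp ic_2$) gives linear independence, and the determinant is indeed $c_3^4$ (the only nonvanishing permutation term is $M_{1,4}M_{2,3}M_{3,2}M_{4,1}$), so $c=(0,0,1,0,0)$ witnesses Tumanov's condition via $\det A_3=1$.
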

\begin{prop}
The real submanifold  $M$ given by \eqref{Fe} is Levi generating at $0,$ that is, of finite type with $2$ the only H\"ormander number, and its Levi map is  non-degenerate.
\end{prop}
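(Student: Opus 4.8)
The plan is to reduce both assertions to linear-algebra statements about the Hermitian matrices $A_1,\dots,A_5$, using the standard description of the CR geometry of a quadratic model. To $M$ one associates the graded nilpotent Lie algebra $\fg_{-1}\oplus\fg_{-2}$ with $\fg_{-1}\cong\C^4$ (the $z$-directions) and $\fg_{-2}\cong\R^5$ (the $\Ree w$-directions), whose bracket $[\cdot,\cdot]\colon\fg_{-1}\times\fg_{-1}\to\fg_{-2}$ is the imaginary part of the vector-valued Levi form $L(u,v)=(L_1(u,v),\dots,L_5(u,v))$, where $L_j$ is the Hermitian polarization of $P_j$ with matrix $A_j$. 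In this language, ``Levi generating with $2$ the only H\"ormander number'' means exactly that the bracket is onto, $[\fg_{-1},\fg_{-1}]=\fg_{-2}$, while ``non-degenerate Levi map'' means that no nonzero $u\in\fg_{-1}$ satisfies $L(u,v)=0$ for all $v$. I would first record these two equivalences, which are routine for quadratic submanifolds.

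For the finite-type claim, I would argue by duality. If the bracket were not onto, there would be a nonzero $c\in\R^5$ annihilating its image, so that $\sum_j c_j\,\Imm L_j(u,v)=0$ for all $u,v\in\C^4$; replacing $v$ by $iv$ and combining shows $\sum_j c_j L_j(u,v)=0$ for all $u,v$, hence $\sum_j c_jA_j=0$. By part $(1)$ of the Lemma the $A_j$ are linearly independent, forcing $c=0$, a contradiction. Thus the brackets of length two already fill $\fg_{-2}$, no longer brackets are needed, and $M$ is of finite type at $0$ with $2$ as its only H\"ormander number.

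For non-degeneracy I would check directly that the common kernel $\bigcap_{j=1}^5\ker A_j$ is trivial, which is equivalent to the stated condition on the Levi map. If $v=(v_1,v_2,v_3,v_4)$ lies in every $\ker A_j$, then $A_4v=0$ gives $v_1=0$, $A_5v=0$ gives $v_2=0$, and then $A_3v=(v_4,v_3,v_2,v_1)^{t}=0$ gives $v_3=v_4=0$; hence $v=0$. The only genuine obstacle here is conceptual rather than computational: one must set up carefully the dictionary between the geometric notions (finite type with a single H\"ormander number, and non-degeneracy of the vector-valued Levi map) and the two algebraic conditions on the $A_j$ --- their linear independence and the triviality of their common kernel. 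Once this dictionary is fixed, both statements follow from part $(1)$ of the Lemma and the one-line kernel computation above.
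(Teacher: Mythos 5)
Your proof is correct, but it is considerably more self-contained than the paper's, which consists of a single citation: the authors invoke Proposition 8, Lemma 3 and Remark 4 of \cite{bl-me1}, i.e.\ exactly the ``dictionary'' you describe (linear independence of the $A_j$ $\Leftrightarrow$ Levi generating with $2$ the only H\"ormander number; triviality of the common kernel $\Leftrightarrow$ non-degeneracy of the Levi map), together with the preceding Lemma on the $A_j$. So the mathematical content is the same, but you actually carry out the two verifications: the duality argument (a nonzero annihilator $c$ of the image of the Levi form forces $\sum_j c_jA_j=0$, contradicting linear independence) and the explicit kernel computation $A_4v=0\Rightarrow v_1=0$, $A_5v=0\Rightarrow v_2=0$, $A_3v=0\Rightarrow v_3=v_4=0$. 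One small divergence worth noting: the paper's Lemma records the stronger Tumanov condition, namely the existence of $c\in\R^5$ with $\det\bigl(\sum_j c_jA_j\bigr)\neq0$, and non-degeneracy of the Levi map follows from that a fortiori (a common kernel vector would lie in the kernel of every linear combination); you instead verify the weaker trivial-common-kernel condition directly, which is all the Proposition needs. Both routes are valid; yours has the advantage of not relying on the external reference, at the cost of having to justify the two equivalences that \cite{bl-me1} supplies ready-made.
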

\begin{proof}
This follows for instance from Proposition 8, Lemma 3 and Remark 4 in \cite{bl-me1}.
\end{proof}
\begin{rem} The following identity between  the $P_i's$ holds:
\begin{equation}\label{fer1}
{P_1}^2 + {P_2}^2 -4 P_4P_5=0.
\end{equation}
\end{rem}
The following  holomorphic vectors fields  are in $hol(M,0),$ the set of germs of real-analytic infinitesimal CR automorphisms at $0.$

\begin{enumerate}
\item $ X:=i(z_1
\dfrac{\partial}{\partial {z_3}}+z_2 \dfrac{\partial}{\partial {z_4}})$
\item $Y:= i(-iz_1\dfrac{\partial}{\partial {z_3}}+iz_2 \dfrac{\partial}{\partial {z_4}})$
\item $Z:= i(z_1\dfrac{\partial}{\partial {z_4}})$
\item $U:= i(z_2\dfrac{\partial}{\partial {z_3}})$
\end{enumerate}

\begin{lem} Let $P=(P_1, \dots, P_4).$
The following holds:
\begin{enumerate}
\item $X(P)=(0,0,iP_1,0,0)$
\item $Y(P)=(0,0,iP_2,0,0)$
\item $Z(P)=(0,0,iP_4,0,0)$
\item $U(P)=(0,0,iP_5,0,0).$
\end{enumerate}
\end{lem}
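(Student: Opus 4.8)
The plan is to carry out the four computations directly, treating each of $X,Y,Z,U$ as a holomorphic derivation acting componentwise on $P=(P_1,\dots,P_5)$ (I read the subscript range in the statement as a typo for $P_5$, since the asserted outputs are $5$-tuples). The decisive structural observation is that all four vector fields are combinations of $\partial_{z_3}$ and $\partial_{z_4}$ only, while among the defining forms $P_1,\dots,P_5$ only $P_3$ involves the holomorphic variables $z_3,z_4$. Hence for every one of the four fields the derivatives of $P_1,P_2,P_4,P_5$ vanish identically, so the first, second, fourth, and fifth entries of each output are automatically zero and only the $P_3$-component remains to be computed. This reduces the entire lemma to recording the two partials $\partial_{z_3}P_3=\bar z_2$ and $\partial_{z_4}P_3=\bar z_1$ and substituting them.

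With these two derivatives in hand the four identities fall out by inspection. For $X=i(z_1\,\partial_{z_3}+z_2\,\partial_{z_4})$ one gets $X(P_3)=i(z_1\bar z_2+z_2\bar z_1)=iP_1$; for $Y=i(-iz_1\,\partial_{z_3}+iz_2\,\partial_{z_4})$ one gets $Y(P_3)=i(-iz_1\bar z_2+iz_2\bar z_1)=iP_2$, where the expression in parentheses is literally $P_2$; for $Z=iz_1\,\partial_{z_4}$ one gets $Z(P_3)=iz_1\bar z_1=iP_4$; and for $U=iz_2\,\partial_{z_3}$ one gets $U(P_3)=iz_2\bar z_2=iP_5$. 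In each case the last step is simply recognizing the resulting quadratic monomial as $i$ times the corresponding defining form.

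There is no serious obstacle here; the only point requiring care is bookkeeping of the factors of $i$ and the signs, most notably in the $Y$ case, where the two $i$'s internal to the field collapse the combination precisely onto the antisymmetric form $P_2$. It is worth emphasizing explicitly that the pairing of $(X,Y,Z,U)$ with $(P_1,P_2,P_4,P_5)$ is forced by the way $z_3,z_4$ enter $P_3$, namely through the coefficients $\bar z_2$ and $\bar z_1$; this matching is the structural feature of the example that is exploited in what follows.
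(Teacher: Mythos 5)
Your computation is correct and is exactly the direct verification the paper intends (the lemma is stated there without proof); the observation that $X,Y,Z,U$ involve only $\partial_{z_3},\partial_{z_4}$ while only $P_3$ depends on $z_3,z_4$, together with $\partial_{z_3}P_3=\bar z_2$ and $\partial_{z_4}P_3=\bar z_1$, settles all four identities. You are also right that the range in $P=(P_1,\dots,P_4)$ is a typo for $P=(P_1,\dots,P_5)$.
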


\begin{lem} The following identities  hold:

\begin{enumerate}
\item $P_1 (-Y(P)) +P_2 (X(P)=0$
\item $P_1X(P) + P_2(Y(P) + P_5 (-2Z(P)) + P_4 (-2U(P)) =0$
\item $P_2(-2Z(P)) + P_4(2Y(P) =0$
\item $P_2 (-2U(P)) + P_5 (2Y(P) =0$
\end{enumerate}
\end{lem}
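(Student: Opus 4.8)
The plan is to substitute the explicit expressions for $X(P)$, $Y(P)$, $Z(P)$, $U(P)$ obtained in the preceding lemma directly into each of the four identities, and then to reduce everything to a scalar computation in a single component. The first thing I would record is the structural observation that, by that lemma, each of $X(P)$, $Y(P)$, $Z(P)$, $U(P)$ is a vector in $\C^5$ of the form $(0,0,\ast,0,0)$, with third entries $iP_1$, $iP_2$, $iP_4$, $iP_5$ respectively. Consequently every linear combination appearing on the left-hand side of (1)--(4) is again supported in the third slot, so it suffices to check that the scalar sitting there vanishes.

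For identities (1), (3) and (4) I would simply read off that third component. In (1) it is $-iP_1P_2+iP_2P_1$; in (3) it is $-2iP_2P_4+2iP_4P_2$; in (4) it is $-2iP_2P_5+2iP_5P_2$. Since the $P_j$ are real-valued functions that commute under multiplication, each of these three scalars is a difference of two equal terms and hence vanishes identically. Thus (1), (3) and (4) are immediate and require no special feature of the example beyond commutativity.

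The only identity carrying genuine content is (2). Its third component equals $i(P_1^2+P_2^2-2P_5P_4-2P_4P_5)=i(P_1^2+P_2^2-4P_4P_5)$. The main --- and essentially only --- step is to invoke the relation \eqref{fer1}, namely $P_1^2+P_2^2-4P_4P_5=0$, which forces this scalar to vanish as well. So the crux of the lemma is not any delicate manipulation but precisely the algebraic dependence \eqref{fer1} among the defining polynomials $P_i$; once that relation is in hand, all four identities follow by inspection. The expected obstacle is therefore conceptual rather than computational: recognizing that (2) is exactly the reflection of \eqref{fer1} at the level of the vector fields, while the remaining three identities are purely formal consequences of commutativity.
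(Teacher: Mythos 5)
Your proof is correct and is exactly the verification the paper intends (the paper states this lemma without written proof, relying on the preceding lemma for the components of $X(P),Y(P),Z(P),U(P)$ and on the relation \eqref{fer1} for identity (2)). Your reduction of (1), (3), (4) to commutativity and of (2) to the syzygy $P_1^2+P_2^2-4P_4P_5=0$ matches the intended argument.
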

With the help of the Lemmata, one obtains

\begin{thm}  
{The holomorphic vector field $T$ defined by
\begin{equation}
T= -\dfrac{1}{2}{w_1}^2Y + \dfrac{1}{2}{w_2}^2 Y +
w_1w_2 X -2 w_2w_5 Z -2w_2w_4 U +2 w_4w_5 Y
\end{equation} is in  $ hol (M,0).$}
%where $Y_0=-Y, \ \ Y_1=2Y, \ \ Z_1 = -2Z, \ \ U_1= -2U.$

Hence $2-$jet determination does not hold for germs of  biholomorphisms sending  $M$ to $M.$
\end{thm}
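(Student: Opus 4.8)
The plan is to check directly that $\Ree T$ is tangent to $M$, and then to manufacture nontrivial self-maps of $M$ with trivial $2$-jet by flowing along this real vector field.

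I first reduce tangency to finitely many scalar identities on $M$. For a holomorphic field $V=\sum_k f_k\,\partial_{z_k}+\sum_j g_j\,\partial_{w_j}$, applying $V+\bar V$ to the defining functions $r_j=\Imm w_j-P_j$ yields the tangency criterion
\[
\Imm g_j=2\Ree\!\Bigl(\sum_k f_k\,\partial_{z_k}P_j\Bigr)\quad\text{on } M.
\]
Now $X,Y,Z,U$—and hence $T$—involve only the $\partial_{z_k}$, so for $T$ every $g_j=0$, and the criterion collapses to $\Ree\bigl(T(P_j)\bigr)=0$ on $M$ for $j=1,\dots,5$, where $T=\sum_k F_k\,\partial_{z_k}$ and $T(P_j)=\sum_k F_k\,\partial_{z_k}P_j$.

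Next I evaluate $T(P_j)$ using the Lemma that records $X(P),Y(P),Z(P),U(P)$. Each of these vectors has its single nonzero entry in the third slot, so $T(P_j)=0$ identically for $j\neq 3$, while, reading the coefficients off the definition of $T$,
\[
T(P_3)=iQ,\qquad Q=\Bigl(-\tfrac12 w_1^2+\tfrac12 w_2^2+2w_4w_5\Bigr)P_2+w_1w_2\,P_1-2w_2w_5\,P_4-2w_2w_4\,P_5.
\]
Since $\Ree(iQ)=-\Imm Q$, the whole tangency problem reduces to the single condition $\Imm Q=0$ on $M$. This identity is the crux, and I expect it to be the main obstacle. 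I substitute the defining equations $w_j=x_j+iP_j$ of $M$ (with $x_j=\Ree w_j$, and noting that $x_3$ never appears) into $Q$, expand, and collect the result by the free real parameters $x_1,x_2,x_4,x_5$. Each coefficient is a quadratic in the $P_j$ that must vanish identically; these four vanishing statements are exactly the identities collected in the Lemma immediately preceding the theorem. The coefficient of $x_2$ is $P_1^2+P_2^2-4P_4P_5$, which is zero by \eqref{fer1}, while the coefficients of $x_1,x_4,x_5$ cancel in pairs. In other words, the weights $-\tfrac12 w_1^2+\tfrac12 w_2^2+2w_4w_5$, $w_1w_2$, $-2w_2w_5$, $-2w_2w_4$ are reverse-engineered precisely so that $\Imm Q$ is annihilated by these relations; checking that the chosen coefficients do produce the cancellations is the one genuinely computational point.

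Finally, to conclude the failure of $2$-jet determination, I note that every $F_k$ is homogeneous of degree $3$ in $(z,w)$, so $T$ vanishes to order $3$ at $0$. The local flow $\varphi_t=\exp(t\,\Ree T)$ then consists of germs of biholomorphisms, being the real flow of the real part of a holomorphic vector field, and it preserves $M$ by the tangency established above. Because $T$ vanishes to order $3$, each $\varphi_t$ agrees with the identity to second order at $0$, i.e.\ has the same $2$-jet; yet $\varphi_t$ is nontrivial for small $t\neq0$ since $T\not\equiv0$. Thus $M$ admits self-biholomorphisms with the same $2$-jet as the identity that are not the identity, which is the asserted counterexample.
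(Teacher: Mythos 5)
Your proposal is correct and follows essentially the same route the paper intends: reduce tangency to $\Ree\bigl(T(P_3)\bigr)=0$ on $M$, substitute $w_j=\Ree w_j+iP_j$, and observe that the coefficients of $\Ree w_1,\Ree w_2,\Ree w_4,\Ree w_5$ are exactly the four identities of the preceding Lemma (the $\Ree w_2$ one being \eqref{fer1}), which is precisely what the paper's terse ``with the help of the Lemmata'' refers to. Your concluding flow argument for the failure of $2$-jet determination is the standard one and is also what the paper has in mind.
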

\begin{rem} {We will see in Section 4 that  the number of jets needed to characterize germs of  biholomorphisms sending  $M$ to $M$ is $4.$ }
\end{rem}
\begin{rem}
Notice  that the bound for the   number $k$ of jets  needed to determine uniquely any germ of  biholomorphism sending $M$ to $M$   is  $$k= (1+ \text{codim} \  M),$$  $M$ beeing a generic (Levi generating with non-degenerate Levi map) real-analytic submanifold: see Theorem 12.3.11, page 361 in \cite{BER}.   We  point out that  Zaitsev  obtained the bound  $k= 2(1+ \text{codim} \  M)$ in \cite{Za}.
\end{rem}
\bigskip

\section{Quadric models and Tanaka prolongation}

In this section, we will work with  quadric models  $M_0\subset\mathbb C^{n+k}$   that are $2$-degree polynomial submanifolds given by the following  system of equations
\begin{equation}\label{bg}
\Imm w_1=zH_1z^*,\cdots,\Imm w_k=zH_kz^*,\quad 
\end{equation} 
where $z\in\mathbb C^n,\,\,w\in\mathbb C^k,1\leq k\leq n^2$ and each $H_j$ is a $n\times n$ Hermitian matrix, i.e., $H_j^*=H_j$ holds for the conjugate transpose denoted by ${}^*.$

Recall that a (Levi) non--degenerate quadric   model in the sense of \cite{belold} is given by defining equations \eqref{bg} satisfying the following conditions:

\begin{enumerate}
\item the Hermitian matrices $H_j$ are linearly independent, and
\item the common kernel of all Hermitian matrices $H_j$ is trivial, i.e., $zH_jz^*=0$ for all $j$ implies $z=0$.
\end{enumerate}

The non--degenerate quadric models have many special properties.
  Firstly, they are weighted homogeneous for integral weights
$$[z_j]=1,\quad [w_j]=2.$$
 Further, they have a special infinitesimal CR automorphism 
$$E:=\sum_j z_j\frac{\partial}{\partial z_j}+2\sum_j w_j\frac{\partial}{\partial w_j}$$
called the Euler field or grading element. The Euler field (grading element) provides the structure of a graded Lie algebra to the Lie algebra of infinitesimal CR automorphisms of non--degenerate quadric models that is a decomposition $\fg=\fg_{a}\oplus\fg_{a+1}\dots \fg_{b-1}\oplus\fg_{b},$ compatible with Lie bracket in the sense
$$[\fg_{c},\fg_{d}]\subset \fg_{c+d}$$
(assuming $\fg_{e}=0$ for $e<a$ or $e>b$).
 Finally, the negative part $\fg_{-2}\oplus\fg_{-1}$ of the grading of Lie algebra of infinitesimal CR automorphism of non--degenerate quadric models is (infinitesimally) transitive, i.e., the real span of the real parts of these vector fields on $M_0$ is $TM_0$. In particular,
\begin{align*}
\fg_{-2}&=\{\sum q_j\frac{\partial}{\partial w_j}\}\\
\fg_{-1}&=\{\sum p_j\frac{\partial}{\partial z_j}+2i\sum_k zH_kp^*\frac{\partial}{\partial w_k}\},
\end{align*}
where $p\in\mathbb C^n,\,\,q\in\mathbb R^k$.
\begin{df}
A non--degenerate Levi Tanaka algebra (of a nondegenerate quadric model) is a graded Lie algebra $\fm=\fg_{-2}\oplus\fg_{-1}$ together with complex structure$J$ on $\fg_{-1}$ satisfying
\begin{enumerate}
\item $[\fg_{-1},\fg_{-1}]=\fg_{-2}$,
\item $[X,\fg_{-1}]=0,$    $X\in \fg_{-1}$, implies $X=0$ 
\item $[J(X),J(Y)]=[X,Y]$ for all $X,Y\in \fg_{-1}.$
\end{enumerate}
\end{df}
It is simple to check that the negative part $\fg_{-2}\oplus\fg_{-1}$ of the Lie algebra of infinitesimal CR automorphism with the Lie bracket taken with the opposite sign defines a non--degenerate Levi Tanaka algebra (at $z=0,w=0$) of the nondegenerate quadric model with $J$ induced by multiplication by $i$. We emphasize that the opposite sign is due to the difference of bracket of left and right invariant vector fields. In particular, the Lie bracket in the non--degenerate Levi Tanaka algebra of a nondegenerate quadric model given by defining equations \eqref{bg} is
$$[(q,p),(\tilde q,\tilde p)]=(2i(-pH_j\tilde p^*+\tilde pH_jp^*),0)$$
in the above coordinates $(q,p)$ of $\fg_{-2}\oplus\fg_{-1}$.

Conversely, for every non--degenerate Levi Tanaka algebra $\fm=\fg_{-2}\oplus\fg_{-1}$ we can reconstruct uniquely up to  holomorphic linear   change of coordinates a non--degenerate quadric model by formula
$$\Imm w:=\frac14[J(z),z],$$
see \cite[Lemma 3.3]{G18}.
Indeed, in the above situation $\Imm w_j=\frac12i(-izH_jz^*+zH_j(iz)^*)=zH_jz^*,$ because $J(z)=iz.$

Recall that  $\mathfrak{der}_0(\fm)$ is the space of the grading preserving derivations of $\fm$, that is,  the linear maps $f: \fg_{-i} \longrightarrow \fg_{-i}, \ i=1,2, $ such that $f([X,Y])=[f(X),Y]+[X,f(Y)]$.

Setting
\begin{equation}\label{ta1}\fg_0:=\{f\in \mathfrak{der}_0(\fm)|f(J(Y))=J(f(Y)) \rm{\ for\ all\ } Y\in \fg_{-1}\},
 \end{equation}
the infinitesimal CR automorphisms of the Levi nondegenerate quadric models with the Levi--Tanaka algebra $(\fm,J)$ can be computed by  the so called Tanaka prolongation from \cite{T}. Let us recall that Tanaka prolongation  of  $\fm \oplus \fg_0, $  where  $ \fg_0$  is defined by \ref{ta1},
 is the maximal nondegenerate graded Lie algebra  $\fg(\fm,\fg_0)$ containing the graded algebra $\fm \oplus \fg_0$, that is, 
\begin{enumerate}
\item  $\fg_i(\fm,\fg_0)=\fg_i,\ i \le 0.$
\item   If $X \in \fg_i$ with  $i>0,$ satisfies $[X,\fg_{-1} ]=0,$ then $X=0.$
\item $\fg(\fm,\fg_0)$ is the maximal graded Lie algebra satisfying (1) and (2).
\end{enumerate}

The result of \cite{T} is that for $i>0,$
\begin{gather*}
%\fg_0=\{f\in \mathfrak{der}_0(\fm)| f(I(Y))=I(f(Y)) \rm{\ for\ all\ } Y\in \fg_{-1}\}\\
\fg_i=\{f\in \oplus_{j<0}\fg_j^*\otimes \fg_{j+i}:\ f([X,Y])=[f(X),Y]+[X,f(Y)] \\
\rm{\ for\ all\ } X,Y\in \fm\}
\end{gather*}
holds for the Tanaka prolongation $\fg$ of $\fm\oplus \fg_0.$  Let us emphasize that $f\in \fg_i, \ i>0,$ is uniquely determined by the component of $f$ in $\fg_{-1}^*\otimes \fg_{i-1}$, because $\fm$ is generated by $\fg_{-1}$. In \cite{T}, Tanaka proves the following result.

\begin{thm}\cite{T}
If $(\fm,J)$ is a non--degenerate Levi--Tanaka algebra, then $\fg_s=0$ for all $s$ large enough and the Tanaka prolongation $\fg$ of $(\fm,J)$ is a finite dimensional Lie algebra.
\end{thm}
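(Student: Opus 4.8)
The plan is to proceed in two stages: first reduce the statement to the existence of a single positive degree at which the prolongation vanishes, and then establish that vanishing by a finite-type argument. The starting point is the \emph{rigidity} built into the definition: for $i>0$ the map $\fg_i\to\mathrm{Hom}(\fg_{-1},\fg_{i-1})$, $f\mapsto([f,\cdot\,]:\fg_{-1}\to\fg_{i-1})$, is injective. This is precisely condition (2) in the definition of the prolongation together with the fact that $\fm$ is generated by $\fg_{-1}$ (so that $f$ is determined by its component in $\fg_{-1}^*\otimes\fg_{i-1}$, as already noted in the excerpt). Two consequences follow at once. On the one hand, each $\fg_i$ embeds in $\fg_{-1}^*\otimes\fg_{i-1}$, so by induction every graded piece is finite-dimensional; hence finite-dimensionality of $\fg$ is \emph{equivalent} to the vanishing $\fg_s=0$ for all large $s$. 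On the other hand, if $\fg_s=0$ for some $s\ge1$, then for $f\in\fg_{s+1}$ we have $[f,\fg_{-1}]\subset\fg_s=0$, whence $f=0$ by rigidity; thus $\fg_{s+1}=0$ as well. Consequently the set $\{s\ge1:\fg_s\neq0\}$ is an initial segment of the positive integers, and it suffices to prove that it is \emph{not} all of $\mathbb Z_{>0}$, i.e. to exhibit one degree $s\ge1$ with $\fg_s=0$.

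Next I would set up the comparison with classical prolongation theory. Since $\fg_{-2}=[\fg_{-1},\fg_{-1}]$, a grading-preserving derivation that kills $\fg_{-1}$ also kills $\fg_{-2}$, so the restriction $\fg_0\hookrightarrow\mathfrak{gl}(\fg_{-1})$ is injective; write $\fl$ for its image. Unwinding the derivation identity $f([X,Y])=[f(X),Y]+[X,f(Y)]$ for $f\in\fg_1$ and $X,Y\in\fg_{-1}$ shows that the $\fg_{-1}\to\fg_0$ component $\phi$ of $f$ satisfies that $\phi(X)(Y)-\phi(Y)(X)$ depends only on $[X,Y]\in\fg_{-2}$, the antisymmetric part being recorded by the $\fg_{-2}\to\fg_{-1}$ component of $f$. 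Thus $\fg_1$ is an extension of the classical first prolongation $\fl^{(1)}$ of $\fl\subset\mathfrak{gl}(\fg_{-1})$ by a space controlled by the finite-dimensional $\fg_{-2}$, and iterating this analysis bounds $\fg_s$ in terms of the classical prolongations $\fl^{(s)}$ together with bounded $\fg_{-2}$-contributions. At this point the classical theorem of Cartan--Singer--Sternberg (equivalently Kobayashi--Nagano) enters: one checks, from the non-degeneracy of the Levi form---conditions (1)--(3) of the non-degenerate Levi--Tanaka algebra, and in particular the triviality of the common kernel of the defining Hermitian forms---that $\fl$ is of \emph{finite type}, so $\fl^{(s)}=0$ for $s$ large. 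Combined with the control of the $\fg_{-2}$-contributions described below, this yields $\fg_s=0$ for some $s\ge1$, which by the first step finishes the proof.

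The main obstacle is exactly the depth-two structure, i.e. the presence of $\fg_{-2}$. Because the Tanaka prolongation condition is strictly weaker than the symmetric condition defining the classical prolongation---the antisymmetric part of $\phi(X)(Y)$ is absorbed into the $\fg_{-2}$-direction rather than being forced to vanish---one cannot simply identify $\fg_s$ with $\fl^{(s)}$, and the delicate point is to show that these extra contributions remain bounded and do not generate an infinite prolongation of their own. Controlling them uses the non-degeneracy of the bracket $\fg_{-1}\times\fg_{-1}\to\fg_{-2}$ in an essential way, and verifying finite type of $\fl$ from the Hermitian non-degeneracy is the technical heart of the argument. Once both are in hand, the reduction of the first paragraph converts the classical finite-type vanishing into the desired statement, giving simultaneously $\fg_s=0$ for all large $s$ and the finite-dimensionality of $\fg$.
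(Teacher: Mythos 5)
The statement you are proving is quoted in the paper from Tanaka's work \cite{T} without proof, so there is no argument in the paper to compare against; your attempt has to stand on its own, and as it stands it is a plan rather than a proof. Your first paragraph is correct and complete: injectivity of $\fg_i\to\fg_{-1}^*\otimes\fg_{i-1}$ for $i>0$ gives finite-dimensionality of each graded piece and shows that $\fg_s=0$ propagates to all higher degrees, so everything reduces to exhibiting one positive degree where the prolongation vanishes. The gap is that the remaining paragraphs never exhibit it. You reduce the problem to two assertions: (a) the image $\fl$ of $\fg_0$ in $\mathfrak{gl}(\fg_{-1})$ is of finite type in the classical Singer--Sternberg sense, and (b) finite type of $\fl$ forces $\fg_s=0$ for some $s$ despite the depth-two structure; you then call (a) ``the technical heart'' and (b) ``the delicate point'' and prove neither. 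Assertion (b) is essentially Tanaka's own theorem relating finite-dimensionality of the prolongation of $(\fm,\fg_0)$ to finite type of the linear isotropy representation on $\fg_{-1}$, a substantial result occupying a large part of \cite{T}; the iteration you sketch for $\fg_1$ does not obviously close up at higher degrees, because the ``$\fg_{-2}$-contribution'' to $\fg_s$ is a map $\fg_{-2}\to\fg_{s-2}$ whose target is not yet known to be small, so the claimed bound on $\fg_s$ in terms of $\fl^{(s)}$ is circular as written.

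Assertion (a) is also not a routine verification, and the way you motivate it is misleading: you attribute finite type of $\fl$ ``in particular'' to the triviality of the common kernel of the Hermitian forms. Non-degeneracy of the bracket $\fg_{-1}\times\fg_{-1}\to\fg_{-2}$ alone does not give finite type: the contact algebra ($\fg_{-2}=\R$, $\fg_{-1}$ a real symplectic vector space with the symplectic form as bracket, $\fg_0$ the full algebra $\mathfrak{der}_0(\fm)\cong\mathfrak{csp}(\fg_{-1})$) is non-degenerate in exactly this sense, yet its Tanaka prolongation is the infinite-dimensional algebra of formal contact fields. What saves the Levi--Tanaka case is the complex structure: $\fg_0$ is cut down to the $J$-commuting derivations as in \eqref{ta1}, and condition (3), $[J(X),J(Y)]=[X,Y]$, makes the brackets Hermitian, so $\fl$ lies in the intersection of a complex general linear algebra with a conformal symplectic one; it is this combination, not non-degeneracy alone, that must be shown to kill the classical prolongation (e.g.\ by excluding rank-one elements of the complexification). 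Until (a) is actually carried out using $J$ and (b) is either proved or explicitly invoked as Tanaka's theorem, the argument does not establish the statement.
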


Let us recall following \cite[Section 3.4]{G18}, how to construct the holomorphic vector fields corresponding to elements $X_b\in \fg_b$ of the Tanaka prolongation $\fg$ of the non--degenerate quadric model given by defining equations \eqref{bg}. For this we have to consider the adjoint representation $\ad$ given by the Lie bracket on the complexification $\fg_\C$ of $\fg$ and identify the abelian subalgebra $\fn_{-2}\oplus \fn_{-1}$ of $\fg_\C$ with our coordinates $(w,z)$, where $\fn_{-2}$ is the complexification of $\fg_{-2}$ and $\fn_{-1}$ is the $i$--eigenspace of $J$ in complexification of  $\fg_{-1}$. In this notation we get formula
\begin{equation} \label{eq1}
\begin{split}
\sum_{c+2d=b+1,\ c,d\geq 0} \frac{(-1)^{c+d}}{(c+d)!}(\ad(z)^c(\ad(w)^d(X_b)))_{\fn_{-1},j}\frac{\partial}{\partial z_j}\\
+\sum_{c+2d=b+2,\ c,d\geq 0} \frac{(-1)^{c+d}}{(c+d)!}(\ad(z)^c(\ad(w)^d(X_b)))_{\fn_{-2},j}\frac{\partial}{\partial w_j}
\end{split}
\end{equation}
the holomorphic vector fields corresponding to elements $X_b\in \fg_b$,
where ${}_{\fn_{-i},j}$ means the projection from $\fg_\C$ to jth--component of $\fn_{-i}$ (along the $-i$--eigenspace of $J$ in complexification of  $\fg_{-1}$). Indeed, since $c+2d=b+i$, we project elements of complexification of $\fg_{-i}$.

We denote by $R$ the radical of the Tanaka prolongation $\fg$ of $(\fm,J)$. Let us recall that the Levi decomposition Theorem (see for instance\cite{FH}) ensures that the semisimple Lie algebra $\fg/R$ is isomorphic to a  (not necessarily unique) subalgebra $\fs\subset \fg$, i.e., $\fg=\fs\oplus_\rho R$, where $\rho: \fs\to \frak{gl}(R)$ is the representation induced by the Lie bracket $[\fs,R]\subset R$.    Medori ad Nacinovich show in \cite[Theorem 3.27]{M}  that we can choose $\fs\subset \fg$ such that $[E,\fs]\subset \fs$, where $E$ is the Euler field compatible with the complex structure  $J$ on $\fg_{-1}$.  More precisely, they show that 
\begin{align*}
\fs&=\fs_{-2}\oplus\fs_{-1}\oplus\fs_0\oplus\fs_1\oplus\fs_2 \\
R&=R_{-2}\oplus \dots \oplus R_{b},
\end{align*}
with $J(\fs_{-1})\subset \fs_{-1}$ and $ J(R_{-1})\subset R_{-1}.$
We emphasize
 that $\{-2, \dots, b\}$ are the weights of the vectors fields.

\section{The construction of Examples}

Let us return to Levi decomposition $\fg=\fs\oplus_\rho R$ and describe, how it is reflected in the defining equations  \eqref{bg} (in the compatible coordinates). Firstly,  $\dim_{\R}(\fs_{-2}\oplus R_{-2})>0$ is the codimension and $\dim_{\C}(\fs_{-1}\oplus R_{-1})>0$ is the complex dimension of (Levi) non--degenerate quadric model and we decompose the coordinates to coordinates $w$ corresponding to $\fs_{-2}$,  $w'$ corresponding to $R_{-2}$, $z$ corresponding to $\fs_{-1}$ and  $z'$ corresponding to $R_{-1}$. Since  $[\fs_{-1},R_{-1}]\subset R_{-2}$, we see that 
$$\Imm w'_j=\mbox{Re}(zP_j(z')^*)+z'Q_j(z')^*,$$
where $zP(z')^*:=-2\rho(z)(z')$ and $Q_j$ are a Hermitian matrices completely determined by the bracket $[R_{-1},R_{-1}]\subset R_{-2}.$

Further, we see that if  $\dim_{\R}(\fs_{-2})>0,$ then the equations $$\Imm w_j=zH_jz^*$$
are completely determined by the bracket $[\fs_{-1},\fs_{-1}]\subset \fs_{-2}$  which  does not depend on  $z'$. In particular, we can decompose the equations according to simple factors $\fs^i$ of  $\fs$. For each simple factor $\fs^i=\fs_{-2}^i\oplus\fs_{-1}^i\oplus\fs_0^i\oplus\fs_1^i\oplus\fs_2^i$ with $\dim_{\R}(\fs_{-2}^i)>0$ the equations $\Imm w_{j_i}=zH_{j_i}z^*$ define a real submanifold $M_{s^i}$ in complex space with complex dimension $\dim_{\C}(\fs^i_{-1})>0$ and codimension $\dim_{\R}(\fs^i_{-2})>0$ with Lie algebra of infinitesimal CR automorphisms $\fs^i$. Since real submanifolds $M_{s^i}$ are classified in \cite{MS,A}, we can consider them as the starting point of our investigation. If there are simple factors $\fs^i$ of  $\fs$ with $\dim_{\R}(\fs_{-2}^i)=0$ and $\dim_{\C}(\fs^i_{-1})>0$, then the corresponding $z$ variables do not appear in the equations $\Imm w_{j_i}=zH_{j_i}z^*$, but only in equations $\Imm w'_j=\mbox{Re}(zP_j(z')^*)+z'Q_j(z')^*.$

Now, in addition to Euler field (grading element) $E,$ there are elements $E_{s^i}\in \fs^i$ providing the grading on $\fs^i$.  We set $$E_s: =\sum_i E_{s^i} .$$ Since $\rho(E_{s})$ acts diagonalizably (as an element of Cartan subalgebra) on $V$, we can decompose $V$ according to its eigenvalues. We obtain the following result that allow us to estimate the jet determination in terms of the eigenvalues of $\rho(E_{s})$.

\begin{prop}\label{int}
Let $\fg=\fs\oplus_\rho R$ be the Levi decomposition compatible with grading of the Lie algebra of infinitesimal CR automorphisms of a nondegenerate quadric model. Suppose $E_s\in \fs$ is the element providing the grading on $\fs$. Suppose $W$ is an irreducible subrepresentation of $\rho$ in $R$ decomposing as $W=W_{-2}\oplus \dots \oplus W_{c}$ w.r.t. eigenvalues of the Euler vector field $E$. Then:
\begin{enumerate}
\item the eigenvalue $K_{max}$ of highest weight vector is the largest eigenvalue of $\rho(E_s)$ on $W$ and the eigenvalue $K_{min}$ of lowest weigh vector is the smallest eigenvalue of $\rho(E_s)$ on $W$.
\item $W_i$ is the $i+K_{min}+2$ eigenspace of $\rho(E_s)$ in $W$ and $c=K_{max}-K_{min}-2$.
\item Infinitesimal CR automorphisms in $W_{c}$ have weighted degree $K_{max}-K_{min}-2$ and are at least $K$--jet determined, where $K$ is $\frac{K_{max}-K_{min}}{2}$ rounded down.
\end{enumerate}
\end{prop}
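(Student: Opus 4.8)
The plan is to play the two commuting semisimple operators on $W$ against each other: the Euler field $E$, whose eigenspaces are the graded pieces $W_i$, and the grading element $E_s\in\fs$, whose eigenvalues are the values of the $\fs$--weights on $E_s$. The crucial observation is that $E-E_s$ \emph{centralizes} $\fs$: since both $E$ and $E_s$ induce the grading on $\fs$, one has $[E-E_s,Y]=jY-jY=0$ for every $Y\in\fs_j$, so $\ad(E-E_s)$ commutes with $\rho$. As $W$ is graded it is preserved by $\ad(E)$, and being an $\fs$--submodule it is preserved by $\rho(E_s)=\ad(E_s)|_W$; hence it is preserved by $D:=\ad(E-E_s)|_W=\ad(E)|_W-\rho(E_s)$. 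Since $D$ commutes with the irreducible action $\rho|_W$, Schur's lemma forces $D=\lambda\,\id_W$ for a single scalar $\lambda$. Therefore on $W_i$, where $\ad(E)$ acts by $i$, the operator $\rho(E_s)$ acts by the scalar $i-\lambda$; in particular the $\rho(E_s)$--eigenspaces of $W$ are exactly the graded pieces $W_i$.

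For (1) I would fix the positive root system of $\fs$ compatibly with the grading, so that $E_s$ lies in the closed dominant Weyl chamber (this is the standard normalisation of the grading element of a $|2|$--graded semisimple algebra). Then for any weight $\mu$ of the irreducible module $W$ with highest weight $\Lambda$ one has $\Lambda-\mu\in\sum_{\alpha}\mathbb Z_{\ge0}\,\alpha$ over the simple roots $\alpha$, whence $(\Lambda-\mu)(E_s)=\sum n_\alpha\,\alpha(E_s)\ge0$; thus the highest weight vector realises the largest value $K_{max}$ of $\rho(E_s)$ on $W$, and applying the longest Weyl element the lowest weight vector realises the smallest value $K_{min}$. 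Combined with the first paragraph, these extreme eigenvalues occur in the extreme graded pieces $W_c$ and $W_{-2}$. Reading off the two ends gives (2): the smallest and largest $\rho(E_s)$--eigenvalues are $-2-\lambda$ and $c-\lambda$, so comparing with $K_{min},K_{max}$ yields $\lambda=-2-K_{min}$ and $c=K_{max}-K_{min}-2$; substituting back, $\rho(E_s)$ acts on $W_i$ by $i-\lambda=i+K_{min}+2$, i.e. $W_i$ is precisely the $(i+K_{min}+2)$--eigenspace of $\rho(E_s)$.

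For (3), elements of $W_c$ lie in $\fg_c$ and so have weighted degree $c=K_{max}-K_{min}-2$ by definition of the grading. To get the jet estimate I pass to the holomorphic vector field attached to $X_c\in W_c$ by formula \eqref{eq1}: in its $\partial_{z_j}$--part the summand indexed by $(c',d)$ with $c'+2d=c+1$ is a polynomial of unweighted degree $c'+d$, and minimising $c'+d$ under $c'+2d=c+1$, $c',d\ge0$ gives $c'+d\ge\lceil(c+1)/2\rceil=\lfloor(c+2)/2\rfloor=K$; the $\partial_{w_j}$--part, indexed by $c'+2d=c+2$, contributes only higher degree. Hence every coefficient of this vector field vanishes to order at least $K$ at the origin, so its time--$t$ flow is a biholomorphism preserving the quadric model that agrees with the identity up to order $K-1$ while being nontrivial for $X_c\ne0$. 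This exhibits the failure of $(K-1)$--jet determination, i.e. these automorphisms are determined by no jet of order smaller than $K$.

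The main obstacle I expect is the normalisation underlying step (1): one must genuinely verify that the grading element $E_s$ can be taken dominant, so that ``highest/lowest weight vector'' matches ``largest/smallest $\rho(E_s)$--eigenvalue.'' This is exactly where the compatibility of the Levi factor with the grading (the choice $[E,\fs]\subset\fs$ of Medori--Nacinovich) enters, together with the standard fact that a grading element defines a parabolic and hence a dominant chamber. A secondary technical point, needed so that the Schur argument in the first paragraph applies to \emph{the} $W$ in the statement, is that one may restrict to a $W$ that is simultaneously $\fs$--irreducible and graded; this is automatic, since $D=\ad(E-E_s)$ is semisimple and commutes with $\rho$, so $R$ splits into graded $\fs$--irreducibles on each of which $D$ acts as a scalar.
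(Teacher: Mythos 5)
Your argument is correct, and for parts (1)--(2) it reaches the same key fact as the paper --- that the $E$--grading of $W$ and the $\rho(E_s)$--eigenspace decomposition differ by a single shift --- but justifies it differently: the paper argues that $W$ is generated from its highest (resp.\ lowest) weight vector by $\rho(\fs_{-2}\oplus\fs_{-1}\oplus\fs_0)$ (resp.\ $\rho(\fs_0\oplus\fs_1\oplus\fs_2)$), each of whose elements shifts both eigenvalues by the same amount, whereas you observe that $E-E_s$ centralizes $\fs$ and invoke Schur's lemma to make $\ad(E)|_W-\rho(E_s)$ a scalar. Your version is arguably cleaner, though since $W$ is a \emph{real} irreducible module the commutant may be $\C$ or $\H$; you should add the one-line remark that $D$ is diagonalizable with real eigenvalues (being a difference of two commuting such operators), which forces it to be a real scalar. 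The real divergence is in part (3): the paper simply cites \cite[Corollary 3.6]{G18}, while you derive the jet bound directly from formula \eqref{eq1} by minimizing the unweighted degree $c'+d$ subject to $c'+2d=c+1$ (resp.\ $c+2$), obtaining vanishing to order $\lceil (c+1)/2\rceil=K$ and hence a nontrivial automorphism with trivial $(K-1)$--jet; this is a welcome self-contained substitute for the citation (only note that the $\partial/\partial w_j$ part contributes degree at least $K$, not necessarily strictly more, which does not affect the conclusion). Your normalization concern in step (1) --- that $E_s$ be dominant for a positive system compatible with the grading --- is exactly the standard fact the paper also uses implicitly when it speaks of generation from the highest weight vector, and is supplied by the Medori--Nacinovich compatibility $[E,\fs]\subset\fs$.
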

\begin{pf}
Since $W$ is irreducible, it is generated from single highest weight vector by actions of elements of $\rho(\fs_{-2}\oplus\fs_{-1}\oplus\fs_0)$ or from single lowest weight vector by actions of elements of $\rho(\fs_{2}\oplus\fs_{1}\oplus\fs_0)$. Therefore, the grading $W=W_{-2}\oplus \dots \oplus W_{c}$ is completely determined by eigenvalues  of  $\rho(E_s)$ on $W$ up to a shift. If $K_{max}$ is the largest eigenvalue of  $\rho(E_s)$ on $W$ and  $K_{min}$ is the smallest eigenvalue of  $\rho(E_s)$ on $W$, then clearly $W_{-2}$ is the $K_{min}$ eigenspace of  $\rho(E_s)$ and the shift is $K_{min}+2$. Therefore $c+K_{min}+2=K_{max}$ and the first two claims follow. The second claim is consequence of \cite[Corollary 3.6]{G18}.
\end{pf}

\begin{rem}
The proposition suggests, where to start:

\begin{enumerate}
\item We have to restrict our selves to the  irreducible subrepresentation $$V=V_{-2}\oplus \dots \oplus V_{c}$$ of $R$ with largest $$c=K_{max}-K_{min}-2,$$ because only here we have an a priory estimate on jet determinacy in the eigenvalues of $\rho(E_s)$.

\item Since $c$ does not depend on the Lie bracket on $V$, we can assume that $$[V,V]=0$$ without changing jet determination of the example.

\item Since we want to start with the known examples  $M_s$ from \cite{MS,A}, we assume that $$\fs=\fs_{-2}\oplus\fs_{-1}\oplus\fs_0\oplus\fs_1\oplus\fs_2$$  is simple and $\dim_{\R}(\fs_{-2})>0$. So we start with defining equations $$\Imm w_j=zH_jz^*$$ for $M_s$ and Euler field $E_s$.

\item So we want to add equations
$$\Imm w'_j=\mbox{Re}(zP_j(z')^*),$$
corresponding to an irreducible $\mathbb{K}$--representation (real, complex or quaternionic representation for $\mathbb{K}=\mathbb{R},\C$ or $\mathbb{H}$) $\rho: \fs\to \frak{gl}(V,\mathbb{K})$ such that $$K_{max}-K_{min}>5$$ holds for the minimal/maximal eigenvalues of $\rho(E_s)$ on $V$.

\item We can define abstractly the Euler field (grading element) $E$ as $E_s$ up to shift by $K_{min}+2$ on $V$ and define $$\fg_{-2}:=\fs_{-2}\oplus V_{-2},\ \fg_{-1}:=\fs_{-1}\oplus V_{-1}$$ w.r.t. to eigenvalues of $E$.
\end{enumerate}
\end{rem}
The assumptions in (1)--(5) for general representation $\rho$ do not ensure that we get an example with high jet determinacy, because there does not have to be complex structure $J$ on $V_{-1}$ that would make $\fg_{-2}\oplus\fg_{-1}$ into Levi Tanaka algebra, i.e., $zP(z')^*:=-2\rho(z)(z')$ is not well--defined without the conjugation provided by the complex structure. Further, even if $(\fg_{-2}\oplus\fg_{-1},J)$ is Levi--Tanaka algebra then $\fs\oplus_\rho V$ does not have to be contained in Lie algebra of infinitesimal CR automorphisms of the corresponding quadric model given by the above equations. The following Theorem provides sufficient conditions for resolving these problems.

\begin{thm}\label{const}
Let $\fs, \rho,K_{max},K_{min}, V=V_{-2}\oplus \dots \oplus V_{K_{max}-K_{min}-2}$ satisfy the assumptions as in the above (1)--(5). If
\begin{enumerate}
\item $V_{-1}$ is a complex representation of $\fs_0$ and the corresponding complex structure $J$ on $\fg_{-1}$ is satisfying $$\rho(J(X)(J(Y)=\rho(X)(Y)$$ for all $X\in \fs_{-1},Y\in V_{-1},$
\item $V_{0}$ acts complex linearly as a map from $\fs_{-1}$ to $V_{-1}$,
\end{enumerate}
then $(\fg_{-2}\oplus\fg_{-1},J)$ is a non--degenerate Levi Tanaka algebra and the nondegenerate quadric model $M$ defined as above has Lie algebra  $\fs\oplus_\rho V\oplus \mathbb{K}$ of infinitesimal CR automorphisms, where $\mathbb{K}$ acts as $0$ of $\fs$ and by scalar multiplication on $V$. In particular, infinitesimal CR automorphisms in $V_{K_{max}-K_{min}-2}$ have weighted degree $K_{max}-K_{min}-2$ and are at least $K$--jet determined, where $K$ is $\frac{K_{max}-K_{min}}{2}$ rounded down.
\end{thm}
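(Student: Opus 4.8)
The plan is to split the proof into three parts: (I) verify that $(\fg_{-2}\oplus\fg_{-1},J)$ satisfies the three axioms of a non--degenerate Levi Tanaka algebra; (II) show that the Tanaka prolongation $\fg$ of $(\fm,J)$, which by the theory recalled above is exactly the Lie algebra of infinitesimal CR automorphisms of $M$, coincides with $\fs\oplus_\rho V\oplus\mathbb{K}$; and (III) read off the jet determination from Proposition \ref{int}.

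For part (I), recall $\fg_{-2}=\fs_{-2}\oplus V_{-2}$ and $\fg_{-1}=\fs_{-1}\oplus V_{-1}$, with $J$ given by the complex structure of $M_s$ on $\fs_{-1}$ and by the complex $\fs_0$--representation structure on $V_{-1}$ (hypothesis (1) is what makes $J$ available on $V_{-1}$, so that $zP(z')^*:=-2\rho(z)(z')$ is well defined). The only brackets to analyze are $[\fs_{-1},\fs_{-1}]\subseteq\fs_{-2}$ (inherited from $M_s$), $[\fs_{-1},V_{-1}]\subseteq V_{-2}$ (given by $\rho$), and $[V_{-1},V_{-1}]=0$ (the assumption $[V,V]=0$). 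Axiom (3), $[J(X),J(Y)]=[X,Y]$, holds on $\fs_{-1}$ because $M_s$ is a non--degenerate quadric, holds trivially on $V_{-1}$ since both sides vanish, and on the mixed pairs $X\in\fs_{-1}$, $Y\in V_{-1}$ it is precisely the content of hypothesis (1), $\rho(J(X))(J(Y))=\rho(X)(Y)$. For axioms (1) and (2) I would use irreducibility of $V$: since $V_{-2}$ is the lowest eigenspace of $\rho(E_s)$ it is an irreducible $\fs_0$--module, and $\rho(\fs_{-1})V_{-1}$ is a nonzero $\fs_0$--submodule of it, hence equals $V_{-2}$, giving $[\fg_{-1},\fg_{-1}]=\fg_{-2}$; dually, if $Y\in V_{-1}$ satisfies $\rho(\fs_{-1})Y=0$ then, since $\rho(\fs_{-2})Y=0$ automatically (it would lie below the eigenvalue $K_{min}$), $Y$ would be killed by all of $\fs_{<0}$ and thus be a lowest weight vector in the wrong eigenspace, contradicting uniqueness of the lowest weight, so $Y=0$; together with non--degeneracy of $M_s$ this yields axiom (2).

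For part (II), I would first check the containment $\fs\oplus_\rho V\oplus\mathbb{K}\subseteq\fg$. This is a genuine graded Lie algebra: $\fs$ is simple, $V$ is an abelian ideal (as $[V,V]=0$), and $\mathbb{K}$ acts by scalar multiplication on $V$ and by $0$ on $\fs$, a grading--preserving $J$--commuting derivation of $\fm$. Hypothesis (2) guarantees $V_0$ acts as a complex linear map $\fs_{-1}\to V_{-1}$, hence sits inside $\fg_0=\mathfrak{der}_0(\fm)\cap\{J\text{--linear}\}$; the higher pieces $\fs_{>0},V_{>0}$ produce the correct prolongation vector fields through formula \eqref{eq1}. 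The harder direction is maximality. Computing $\fg_0$ exactly, $\fg_0=\fs_0\oplus V_0\oplus\mathbb{K}$, is the crux: one shows a degree--zero $J$--linear derivation must preserve the $\fs$-- and $V$--isotypic structure forced by the brackets above, so it restricts to a derivation of the $M_s$--part (yielding $\fs_0$), to a complex linear intertwiner of the $\rho$--action (yielding $V_0$ by hypothesis (2)), and to scalars on $V$ (yielding $\mathbb{K}$). For the positive prolongation I would Levi--decompose $\fg$: because $\fs_{<0}$ is the bracket--generating semisimple part and $V_{<0}$ lies in the radical, the semisimple factor prolongs exactly to $\fs$ (the CR symmetry algebra of the classified model $M_s$ from \cite{MS,A}), while the radical is an $\fs$--module whose negative part is $V_{<0}$; irreducibility of $V$ and Proposition \ref{int} cap its grading at $c=K_{max}-K_{min}-2$, leaving no room beyond $V\oplus\mathbb{K}$.

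The main obstacle is exactly this maximality step in part (II): showing that the Tanaka prolongation of the enlarged algebra $\fm$ creates no spurious symmetries beyond $\fs\oplus_\rho V\oplus\mathbb{K}$, which requires simultaneously the simplicity of $\fs$ (to pin down the semisimple factor as the known prolongation of $M_s$) and the irreducibility of $V$ together with $[V,V]=0$ (to pin down the radical). Once equality $\fg=\fs\oplus_\rho V\oplus\mathbb{K}$ is established, part (III) is immediate: $V_{c}=V_{K_{max}-K_{min}-2}$ is the top graded piece, so its elements are infinitesimal CR automorphisms of weighted degree $K_{max}-K_{min}-2$, and Proposition \ref{int}(3) gives that they are at least $K$--jet determined with $K=\lfloor (K_{max}-K_{min})/2\rfloor$.
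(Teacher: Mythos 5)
Your proposal is correct and follows essentially the same route as the paper: condition (1) yields the Levi--Tanaka axioms, condition (2) yields the containment $\fs\oplus_\rho V\subseteq\fg$, maximality of the prolongation is forced by irreducibility of $V$ (the paper compresses your Levi-decomposition/isotypic argument into a one-line appeal to Schur's Lemma, which is exactly where the extra summand $\mathbb{K}$ of scalars comes from), and the jet bound is read off from Proposition \ref{int}. The only ingredient you leave implicit is the appeal to \cite[Lemma 3.3]{G18} identifying the Tanaka prolongation of $(\fg_{-2}\oplus\fg_{-1},J)$ with the infinitesimal CR automorphism algebra of the realized quadric model, which the paper cites explicitly.
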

\begin{pf}
The condition (1) implies that $(\fg_{-2}\oplus\fg_{-1},J)$ is a non--degenerate Levi Tanaka algebra. The condition (2) implies that $\fs\oplus_\rho V$ is contained in the Tanaka prolongation of the Levi--Tanaka algebra $(\fg_{-2}\oplus\fg_{-1},J)$. Then from the Schur's Lemma follows, that $\fs\oplus_\rho V\oplus \mathbb{K}$ is the Tanaka prolongation of the Levi--Tanaka algebra $(\fg_{-2}\oplus\fg_{-1}, J)$.

The construction in \cite[Lemma 3.3]{G18} provides the above realization of the Levi--Tanaka algebra $(\fg_{-2}\oplus\fg_{-1},J)$ as a nondegenerate quadric model with Lie algebra  $\fs\oplus_\rho V\oplus \mathbb{K}$ of infinitesimal CR automorphisms. Proposition above provides the claim on jet determinacy.
\end{pf}

The examples in this paper were obtained using this Theorem \ref{const}. The codimension 4 example was obtained as follows:

We considered the codimension $3$ submanifold $M_0$ in $\C^6$ given by the following defining equations:
\begin{align*}
\Imm w_1&=-iz_1\bar{z}_2+iz_2\bar{z}_1\\
\Imm w_2&=-iz_2\bar{z}_3+iz_3\bar{z}_2\\
\Imm w_3&=-iz_1\bar{z}_3+iz_3\bar{z}_1
\end{align*}
that has $|2|$--graded Lie algebra $\fs=\frak{so}(3,5)$ of infinitesimal CR automorphisms. Let us check that the real irreducible representation $V$ of $\frak{so}(3,5)$ with the highest weight $\lambda_3+\lambda_4$ (where $\lambda_i$ is the $i$--th fundamental weight) satisfies all the conditions (1)--(2) of Theorem \ref{const}. Firstly,  $K_{max}=3, K_{min}=-3$, $V=V_{-2}\oplus\dots \oplus V_{4}$ and $V_{-1}$ is a standard complex representation of $\fg_0=\frak{sl}(3,\mathbb{R})\oplus \mathbb{C}$. If we look on the weights of the complexification of $\fs\oplus_\rho V$, then we observe that the additional equation corresponding to $\rho(z)(z')$ is 
$$\Imm w'_1=z_1\bar{z}_1'+z_1'\bar{z}_1+z_2\bar{z}'_2+z_2'\bar{z}_2+z_3\bar{z}'_3+z_3'\bar{z}_3,$$ where $z_1,z_2,z_3\in \fg_{-1}\otimes \mathbb{C}$ and $z_1',z_2',z_3'\in V_{-1}\otimes \mathbb{C}$. Therefore  the condition (1) is satisfied. Similarly, we see from the weight spaces in $V_{0}\otimes \mathbb{C}$ that $V_{0}$ acts complex linearly as a map from $\fs_{-1}$ to $V_{-1}$.

Therefore, all the conditions of Theorem \ref{const} are fulfilled and we obtain that the following submanifold in $\C^{10}$ given by the following defining  equations
\begin{align*}
\Imm w_1&=-iz_1\bar{z}_2+iz_2\bar{z}_1\\
\Imm w_2&=-iz_2\bar{z}_3+iz_3\bar{z}_2\\
\Imm w_3&=-iz_1\bar{z}_3+iz_3\bar{z}_1\\
\Imm w'_1&=z_1\bar{z}_1'+z_1'\bar{z}_1+z_2\bar{z}'_2+z_2'\bar{z}_2+z_3\bar{z}'_3+z_3'\bar{z}_3
\end{align*}
has infinitesimal CR automorphism in $V_4$ that has weighted order $4$ and is $3$--jet determined. The 
formula \eqref{eq1} provides the following formula for the corresponding holomorphic vector field:
\begin{align*}
&-w_1 w_3(iz_3\dfrac{\partial}{\partial {z_1'}}+iz_1\dfrac{\partial}{\partial {z_3'}})
+w_2 w_3(iz_2\dfrac{\partial}{\partial {z_1'}}+iz_1\dfrac{\partial}{\partial {z_2'}})
-w_1 w_2(iz_3\dfrac{\partial}{\partial {z_2'}}+iz_2\dfrac{\partial}{\partial {z_3'}})\\
&
+iw_3^2(z_1\dfrac{\partial}{\partial {z_1'}})+iw_2^2(z_2\dfrac{\partial}{\partial {z_2'}})++iw_1^2(z_3\dfrac{\partial}{\partial {z_3'}}),
\end{align*}
where the vector fields in the braces are rigid holomorphic vector fields that are elements of $V_0$.

The codimension 5 example was obtained as follows:

We considered the codimension $4$ submanifold $M_0$ in $\C^6$ given by the following defining equations:
\begin{align*}
\Imm w_1&=z_1\bar{z}_2+z_2\bar{z}_1\\
\Imm w_2&=-iz_1\bar{z}_2+iz_2\bar{z}_1\\
\Imm w_3&=z_1\bar{z}_1\\
\Imm w_4&=z_2\bar{z}_2\\
\end{align*}
that has $|2|$--graded Lie algebra $\fs=\frak{su}(2,3)$ of infinitesimal CR automorphisms. Let us check that the real irreducible representation $V$ of $\frak{su}(2,3)$ with the highest weight $\lambda_2+\lambda_3$ (where $\lambda_i$ is the $i$--th fundamental weight) satisfies all the conditions (1)--(2) of Theorem \ref{const}. Firstly, $K_{max}=4, K_{min}=-4$,   $V=V_{-2}\oplus\dots \oplus V_{6}$ and $V_{-1}$ is a standard complex representation of $\fg_0=\frak{gl}(2,\mathbb{C})$. If we look on the weights of the complexification of $\fs\oplus_\rho V$, then we observe that the additional equation corresponding to $\rho(z)(z')$ is 
$$\Imm w'_1=z_1\bar{z}'_2+z_2'\bar{z}_1+z_2\bar{z}'_1+z_1'\bar{z}_2,$$ where $z_1,z_2\in \fg_{-1}\otimes \mathbb{C}$ and $z_1',z_2'\in V_{-1}\otimes \mathbb{C}$. Therefore  the condition (1) is satisfied. Similarly, we see from the weight spaces in $V_{0}\otimes \mathbb{C}$ that $V_{0}$ acts complex linearly as a map from $\fs_{-1}$ to $V_{-1}$.

Therefore, all the conditions of Theorem \ref{const} are fulfilled, and we obtain that the following submanifold in $\C^9,$ that was given in the first part of the paper,
\begin{align*}
\Imm w_1&=z_1\bar{z}_2+z_2\bar{z}_1\\
\Imm w_2&=-iz_1\bar{z}_2+iz_2\bar{z}_1\\
\Imm w_3&=z_1\bar{z}_1\\
\Imm w_4&=z_2\bar{z}_2\\
\Imm w'_1&=z_1\bar{z}'_2+z_2'\bar{z}_1+z_2\bar{z}'_1+z_1'\bar{z}_2
\end{align*}
has infinitesimal CR automorphism in $V_6$ that has weighted order $6$ and is $4$--jet determined. The formula \eqref{eq1} provides the following formula for the corresponding holomorphic vector field:
\begin{align*}
& (-3w_1^4-6w_1^2w_2^2+24w_1^2w_3w_4-3w_2^4+24w_2^2w_3w_4-48w_3^2w_4^2 ) \dfrac{\partial}{\partial {w_1'}}\\
&+( 2w_1^3+2w_1w_2^2-8w_1w_3w_4)(2w_1\dfrac{\partial}{\partial {w_1'}}+z_1\dfrac{\partial}{\partial {z'_1}}+z_2 \dfrac{\partial}{\partial {z'_2}})\\
&+(2w_1^2w_2+2w_2^3-8w_2w_3w_4)(2w_2\dfrac{\partial}{\partial {w_1'}}-iz_1\dfrac{\partial}{\partial {z'_1}}+iz_2 \dfrac{\partial}{\partial {z'_2}})\\
&+(-4w_1^2w_3-4w_2^2w_3+16w_3^2w_4)(2w_4\dfrac{\partial}{\partial {w_1'}}+z_2\dfrac{\partial}{\partial {z'_1}})\\
&+(-4w_1^2w_4-4w_2^2w_4+16w_3w_4^2)(2w_3\dfrac{\partial}{\partial {w_1'}}+z_1\dfrac{\partial}{\partial {z'_2}}),
 \end{align*}
where the vector fields in the braces are non rigid holomorphic vector fields that are elements of $V_0$.

It is simple to produce counterexamples of higher codimension $k>5$ by adding further equations $$w_{4+j}=z_{2+j}\bar{z}_{2+j}$$
for $j=1,\dots,k-5$ with Lie algebra of infinitesimal CR automorphisms $(\frak{su}(2,3)\oplus_{\rho} V\oplus \mathbb{R})\oplus \bigoplus_j \frak{su}(1,2)$.
We then obtain the following Theorem
\begin{thm}\label{feriel}
For any codimension $k>5$, there is a generic quadratic submanifold $M$ in $\C^{2k-1}$ of codimension $k$ such that $4-$jets are  required (and not less) to determine uniquely  germs of  biholomorphisms sending  $M$ to $M.$
\end{thm}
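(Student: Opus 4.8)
The plan is to exploit the fact that, because the added equations $\Imm w_{4+j}=z_{2+j}\bar z_{2+j}$ involve only the single new variable $z_{2+j}$, the manifold $M$ is a CR product of the codimension-$5$ example $M^{core}\subset\C^9$ of Section 2 with $k-5$ copies of the sphere $S=\{\Imm w=|z|^2\}\subset\C^2$. First I would record the elementary bookkeeping: after the core variables $z_1,z_2,z_1',z_2',w_1,\dots,w_4,w_1'$ one has $k-5$ new pairs $(z_{2+j},w_{4+j})$, so $M$ sits in $\C^{(k-1)+k}=\C^{2k-1}$ and has codimension $k$. For genericity I would observe that the defining Hermitian matrices are block diagonal for the partition of the $z$-space into the core block $\{z_1,z_2,z_1',z_2'\}$ and the one-dimensional blocks $\{z_{2+j}\}$; linear independence and triviality of the common kernel then reduce to the same statements blockwise, which hold by the non-degeneracy of $M^{core}$ (already proved) and of each sphere.

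Next I would identify $\mathrm{hol}(M,0)$ with the Tanaka prolongation $\fg$ of the associated non-degenerate Levi--Tanaka algebra $\fm=\fg_{-2}\oplus\fg_{-1}$, as in Section 3. Since the Levi brackets couple neither the core block to a sphere block nor two distinct sphere blocks, $\fm$ is the direct sum of the Levi--Tanaka algebras of $M^{core}$ and of the $k-5$ spheres. The core prolongs to $\mathfrak{su}(2,3)\oplus_\rho V\oplus\mathbb{R}$ by the verification of Theorem \ref{const} already carried out above, and each sphere prolongs to $\mathfrak{su}(1,2)$. The substantive point is to show that no off-diagonal prolongation occurs, so that
$$\fg=(\mathfrak{su}(2,3)\oplus_\rho V\oplus\mathbb{R})\oplus\bigoplus_{j=1}^{k-5}\mathfrak{su}(1,2).$$
Because the positive part of $\fg$ is generated from $\fg_0$ together with the block-diagonal negative brackets, it suffices to prove that $\fg_0=\mathfrak{der}_0(\fm)\cap\{f:fJ=Jf\}$ is block diagonal. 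A grading-$0$ derivation acts on $\fg_{-1}$ by some $A$ with $A^*H_k+H_kA\in\mathrm{span}(H_1,\dots,H_k)$ for every $k$; since each $H_k$ is supported in a single block while the right-hand side is again a block-supported combination, the off-diagonal blocks of the left-hand side must vanish, which forces every column of an off-diagonal block of $A$ to lie in the common kernel of the forms of that block. Triviality of those kernels then makes $A$ block diagonal. This is the step I expect to be the main obstacle, as it is precisely what rules out the hidden ``mixing'' symmetries that could otherwise lower or raise the jet order.

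Finally I would read the jet order off the grading of $\fg$. For the lower bound, the explicit vector field attached to $V_6$ displayed above has coefficients that are homogeneous of ordinary degree $4$; it therefore vanishes to order exactly $4$ at the origin, so its time-$t$ flow is a one-parameter family of non-identity automorphisms of $M$ all sharing the $3$-jet of the identity, and $3$-jets do not suffice. For the upper bound I would use that the top weighted degree occurring in $\fg$ is $6$, attained only on $V_6$: indeed the spheres and the semisimple factor $\mathfrak{su}(2,3)$ are $|2|$-graded and top out in weighted degree $2$ (for the spheres this is the classical Chern--Moser bound \cite{CM}). By the injectivity of the map sending $f\in\fg_b$ to its $\fg_{-1}^*\otimes\fg_{b-1}$ component recalled in Section 3, a nonzero field of weighted degree $b\le 6$ must carry a nonzero coefficient of ordinary degree at most $4$, hence vanishes to order at most $4$; consequently any automorphism with trivial $4$-jet has trivial generating field and is the identity, so $M$ is $4$-jet determined. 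Since Proposition \ref{int} and Theorem \ref{const} assign to $V_6$ the weighted degree $K_{max}-K_{min}-2=6$ and the bound $K=\lfloor 8/2\rfloor=4$, combining the two inequalities shows that $4$-jets, and not fewer, are required to determine germs of biholomorphisms preserving $M$, which is the assertion of the theorem.
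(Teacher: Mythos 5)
Your construction and overall route coincide with the paper's: Theorem \ref{feriel} is obtained there precisely by adjoining the equations $\Imm w_{4+j}=z_{2+j}\bar{z}_{2+j}$ to the codimension-$5$ example and asserting that the algebra of infinitesimal CR automorphisms becomes $(\frak{su}(2,3)\oplus_{\rho} V\oplus \mathbb{R})\oplus \bigoplus_j \frak{su}(1,2)$. Your dimension count, the blockwise verification of linear independence and of the trivial common kernel, and your argument that $\fg_0$ must be block diagonal (each column of an off-diagonal block of $A$ is forced into the common kernel of the Hermitian forms of the corresponding block, which is trivial) are correct, and they actually supply details that the paper leaves as ``it is simple to produce.'' The passage from block-diagonal $\fg_0$ to the splitting of the whole prolongation deserves one more line (for $f\in\fg_i$, $i>0$, one checks $f(\fg_{-1}^a)\subset\fg_{i-1}^a$ using $[\fg_{-1}^a,\fg_{-1}^b]=0$ and non-degeneracy of each factor), but this is routine.

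The one step that does not work as stated is your justification of the upper bound. The injectivity of the map sending $f\in\fg_b$ to its component in $\fg_{-1}^*\otimes\fg_{b-1}$ only guarantees that some iterated bracket of $f$ with $b+1$ elements of $\fg_{-1}$ is nonzero; in formula \eqref{eq1} this corresponds to the pure-$z$ terms ($d=0$), whose coefficients have ordinary degree $b+1$, i.e.\ up to $7$ when $b=6$. So this argument alone yields only $7$-jet (or $8$-jet, for the $\partial/\partial w$ part) determination, not the claimed ``nonzero coefficient of ordinary degree at most $4$.'' To reach degree $4$ one needs the terms of \eqref{eq1} with $d$ maximal, i.e.\ one must know that iterated brackets with $\fg_{-2}$ do not annihilate $f$; this is exactly what \cite[Corollary 3.6]{G18} provides and what Proposition \ref{int}(3) and Theorem \ref{const} rely on. Since your final sentence does invoke Proposition \ref{int} and Theorem \ref{const}, the conclusion stands, but the ``injectivity'' sentence should be deleted and replaced by that citation; strictly speaking the same mechanism should also be noted for the intermediate pieces $V_i$ with $0<i<6$, since Proposition \ref{int}(3) as stated only covers the top piece $V_6$, while the $|2|$-graded factors are handled by the classical Chern--Moser bound as you say.
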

\begin{rem}
The CR dimension can clearly be arbitrarily enlarged by taking quadric in larger space, however, the jet determination of the examples remains still $4$.
\end{rem}
%\begin{rem}
%If one considers tensor product $V=V'\otimes V''$ of representations $\rho'$ and $\rho''$ of $\fs$ on $V'$ and $V''$, respectively, then the Condition (1) of Theorem \ref{const} often implies that $K'_{min}+1$--eigenspace $W'$ in $V'$ and $K''_{min}+1$--eigenspace $W''$ in $V''$ are complex $\fs_0$--representations and thus $W'\otimes W''\subset V_0$ can not satisfy Condition (2) of Theorem \ref{const}. Therefore, it is not simple to produce examples with high jet determinacy. Therefore, it is open question whether there is example of higher than $4-$jet determinacy.
%\end{rem}
\begin{rem}
The question of  $2$-jet determination in codimension $3$ remains still open.
\end{rem}

\section{Examples of jet determination of arbitrarily high order}

In this section, we use the Theorem \ref{const} to show that for any  $n,$ there are examples of codimension depending quadraticaly on $n,$ that have  $n$-jet determination of infinitesimal CR automorphisms. The examples extend to higher rank  what do you mean by rank.

We considered the codimension $\frac{(n-1)n}{2}$ submanifold $M_0$ in $\C^{\frac{n(n+1)}{2}}$ given by the following defining equations:
\begin{align*}
\Imm w_1&=-iz_1\bar{z}_2+iz_2\bar{z}_1\\
&\vdots\\
\Imm w_{n-1}&=-iz_{n-1}\bar{z}_n+iz_n\bar{z}_{n-1}\\
\Imm w_n&=-iz_1\bar{z}_3+iz_3\bar{z}_1\\
&\vdots\\
\Imm w_{\frac{(n-1)n}{2}}&=-iz_{1}\bar{z}_{n}+iz_{n}\bar{z}_{1}
\end{align*}
that has $|2|$--graded Lie algebra $\fs=\frak{so}(n,n+2)$ of infinitesimal CR automorphisms. The example in  the previous section is a special case for $n=3$. Let us check that the real irreducible representation $V$ of $\frak{so}(3,5)$ with the highest weight $\lambda_{n}+\lambda_{n+1}$ (where $\lambda_i$ is the $i$--th fundamental weight) satisfies all the conditions (1)--(2) of Theorem \ref{const}. Firstly,  $K_{max}=n, K_{min}=-n$, $V=V_{-2}\oplus\dots \oplus V_{2n-2}$ and $V_{-1}$ is a standard complex representation of $\fg_0=\frak{sl}(n,\mathbb{R})\oplus \mathbb{C}$. If we look on the weights of the complexification of $\fs\oplus_\rho V$, then we observe that the additional equation corresponding to $\rho(z)(z')$ is 
$$\Imm w'_1=\sum_{j=1}^n z_j\bar{z}'_j+z_j'\bar{z}_j,$$ where $z_1,\dots,z_n\in \fg_{-1}\otimes \mathbb{C}$ and $z_1',\dots,z_n'\in V_{-1}\otimes \mathbb{C}$. Therefore  the condition (1) is satisfied. Similarly, we see from the weight spaces in $V_{0}\otimes \mathbb{C}$ that $V_{0}$ acts complex linearly as a map from $\fs_{-1}$ to $V_{-1}$.

Therefore, all the conditions of Theorem \ref{const} are fulfilled and we obtain that the following submanifold in $\C^{\frac{(n+2)(n+1)}{2}}$ given by the following defining  equations
\begin{align*}
\Imm w_1&=-iz_1\bar{z}_2+iz_2\bar{z}_1\\
&\vdots\\
\Imm w_{n-1}&=-iz_{n-1}\bar{z}_n+iz_n\bar{z}_{n-1}\\
\Imm w_n&=-iz_1\bar{z}_3+iz_3\bar{z}_1\\
&\vdots\\
\Imm w_{\frac{(n-1)n}{2}}&=-iz_{1}\bar{z}_{n}+iz_{n}\bar{z}_{1}\\
\Imm w'_1&=\sum_{j=1}^n z_j\bar{z}'_j+z_j'\bar{z}_j
\end{align*}
has infinitesimal CR automorphism in $V_{2n-2}$ that has weighted order $2n-2$ and is $n$--jet determined. The formula \eqref{eq1} is now too long to provide a reasonable formula for the infinitesimal CR automorphism in $V_{2n-2}$.

The second example extends to higher ranks as follows for even $n=2m$:

We considered the codimension $m^2$ submanifold $M_0$ in $\C^{m+m^2}$ given by the following defining equations:
\begin{align*}
\Imm w_1&=z_1\bar{z}_2+z_2\bar{z}_1\\
&\vdots\\
\Imm w_{m-1}&=z_{m-1}\bar{z}_m+z_m\bar{z}_{m-1}\\
\Imm w_n&=z_1\bar{z}_3+z_3\bar{z}_1\\
&\vdots\\
\Imm w_{\frac{(m-1)m}{2}}&=z_{1}\bar{z}_{m}+z_{m}\bar{z}_{1}\\
\Imm w_{\frac{(m-1)m}{2}+1}&=-iz_1\bar{z}_2+iz_2\bar{z}_1\\
&\vdots\\
\Imm w_{\frac{(m-1)m}{2}+m-1}&=-iz_{m-1}\bar{z}_m+iz_m\bar{z}_{m-1}\\
\Imm w_{\frac{(m-1)m}{2}++m}&=-iz_1\bar{z}_3+iz_3\bar{z}_1\\
&\vdots\\
\Imm w_{(m-1)m}&=-iz_{1}\bar{z}_{m}+iz_{m}\bar{z}_{1}\\
\Imm w_{(m-1)m+1}&=z_1\bar{z}_1\\
&\vdots\\
\Imm w_{m^2}&=z_m\bar{z}_m
\end{align*}
that has $|2|$--graded Lie algebra $\fs=\frak{su}(m,m+1)$ of infinitesimal CR automorphisms. The example in previous section is special case for $m=2$. Let us check that the real irreducible representation $V$ of $\frak{su}(m,m+1)$ with the highest weight $\lambda_m+\lambda_{m+1}$ (where $\lambda_i$ is the $i$--th fundamental weight) satisfies all the conditions (1)--(2) of Theorem \ref{const}. Firstly, $K_{max}=n, K_{min}=-n$,   $V=V_{-2}\oplus\dots \oplus V_{2n-2}$ and $V_{-1}$ is a standard complex representation of $\fg_0=\frak{gl}(m,\mathbb{C})$. If we look on the weights of the complexification of $\fs\oplus_\rho V$, then we observe that the additional equation corresponding to $\rho(z)(z')$ is 
$$\Imm w'_1=\sum_{j=1}{m}z_j\bar{z}'_{m+1-j}+z_{m+1-j}'\bar{z}_j,$$ where $z_1,\dots,z_m\in \fg_{-1}\otimes \mathbb{C}$ and $z_1',\dots,z_m'\in V_{-1}\otimes \mathbb{C}$. Therefore  the condition (1) is satisfied. Similarly, we see from the weight spaces in $V_{0}\otimes \mathbb{C}$ that $V_{0}$ acts complex linearly as a map from $\fs_{-1}$ to $V_{-1}$.

Therefore, all the conditions of Theorem \ref{const} are fulfilled, and we obtain that the following submanifold in $\C^{(m+1)^2},$ that was given in the first part of the paper,
\begin{align*}
\Imm w_1&=z_1\bar{z}_2+z_2\bar{z}_1\\
&\vdots\\
\Imm w_{m-1}&=z_{m-1}\bar{z}_m+z_m\bar{z}_{m-1}\\
\Imm w_n&=z_1\bar{z}_3+z_3\bar{z}_1\\
&\vdots\\
\Imm w_{\frac{(m-1)m}{2}}&=z_{1}\bar{z}_{m}+z_{m}\bar{z}_{1}\\
\Imm w_{\frac{(m-1)m}{2}+1}&=-iz_1\bar{z}_2+iz_2\bar{z}_1\\
&\vdots\\
\Imm w_{\frac{(m-1)m}{2}+m-1}&=-iz_{m-1}\bar{z}_m+iz_m\bar{z}_{m-1}\\
\Imm w_{\frac{(m-1)m}{2}+m}&=-iz_1\bar{z}_3+iz_3\bar{z}_1\\
&\vdots\\
\Imm w_{(m-1)m}&=-iz_{1}\bar{z}_{m}+iz_{m}\bar{z}_{1}\\
\Imm w_{(m-1)m+1}&=z_1\bar{z}_1\\
&\vdots\\
\Imm w_{m^2}&=z_m\bar{z}_m\\
\Imm w'_1&=\sum_{j=1}{m}z_j\bar{z}'_{m+1-j}+z_{m+1-j}'\bar{z}_j
\end{align*}
has infinitesimal CR automorphism in $V_{2n-2}$ that has weighted order $2n-2$ and is $n$--jet determined. The formula \eqref{eq1} is again too long to provide a reasonable formula for the infinitesimal CR automorphism in $V_{2n-2}$.

With the possible addition of further quadrics in new variables, we then obtain the following  theorem  for  jet determination of arbitrarily high order, which can be seen as   an analogous result  to Theorem \ref{feriel}.
 
 \begin{thm}\label{feriel1}
For any even $n=2m$ and any $k>m^2$, there is a generic quadratic submanifold $M$ in $\C^{2k-m^2+2m-1}$ of codimension $k$ such that $n-$jets are  required (and not less) to determine uniquely  germs of  biholomorphisms sending  $M$ to $M.$

For any odd $n$ and any $k>\frac{(n-1)n}{2}$, there is a generic quadratic submanifold $M$ in $\C^{2k-\frac12n^2+\frac{5}{2}n-1}$ of codimension $k$ such that $n-$jets are  required (and not less) to determine uniquely  germs of  biholomorphisms sending  $M$ to $M.$
\end{thm}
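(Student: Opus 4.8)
The plan is to take the two families of quadrics constructed just above as base cases and to enlarge their codimension by adjoining decoupled Chern--Moser spheres, keeping the jet order equal to $n$. For odd $n$, Theorem \ref{const} applied to $\fs=\mathfrak{so}(n,n+2)$ with the representation $V$ of highest weight $\lambda_n+\lambda_{n+1}$ yields a nondegenerate quadric $M^{(0)}$ of codimension $\frac{(n-1)n}{2}+1$ in $\C^{\frac{(n+1)(n+2)}{2}}$; for even $n=2m$, the same Theorem applied to $\fs=\mathfrak{su}(m,m+1)$ with $V$ of highest weight $\lambda_m+\lambda_{m+1}$ yields a nondegenerate quadric $M^{(0)}$ of codimension $m^2+1$ in $\C^{(m+1)^2}$. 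In both cases $K_{max}=-K_{min}=n$, so by Proposition \ref{int}(3) the manifold $M^{(0)}$ carries an infinitesimal CR automorphism in $V_{2n-2}$ of weighted degree $2n-2$ that is $n$-jet determined, and $n$ is the jet order of $M^{(0)}$.

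Given $k$ above the stated threshold, I would set $r$ equal to $k$ minus the base codimension, so $r=k-\frac{(n-1)n}{2}-1$ in the odd case and $r=k-m^2-1$ in the even case; the thresholds $k>\frac{(n-1)n}{2}$ and $k>m^2$ are precisely the conditions $r\ge 0$. To the defining equations of $M^{(0)}$ I would adjoin the $r$ decoupled relations $\Imm\tilde w_j=\tilde z_j\overline{\tilde z_j}$, $j=1,\dots,r$, in fresh variables $\tilde z_j,\tilde w_j$. Each new relation raises the codimension by one and the ambient complex dimension by two, so a direct count gives codimension $k$ and ambient space $\C^{2k-\frac12 n^2+\frac52 n-1}$ in the odd case and $\C^{2k-m^2+2m-1}$ in the even case, as claimed. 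Genericity is preserved: the Hermitian matrices of $M$ are block diagonal, formed by the nondegenerate base block together with $r$ blocks $(1)$, hence they stay linearly independent with trivial common kernel, so $M$ is Levi generating with nondegenerate Levi map.

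Because the variables $\tilde z_j$ occur only in their own equations, $M$ is, as a quadric, the product of $M^{(0)}$ with $r$ copies of the hyperquadric $\Imm\tilde w=|\tilde z|^2$, and its negatively graded Levi--Tanaka algebra is correspondingly the direct sum of that of $M^{(0)}$ with $r$ copies of the Heisenberg algebra of the sphere. Granting that this decomposition persists at the level of the full Tanaka prolongation (discussed below), the Lie algebra of infinitesimal CR automorphisms of $M$ is $(\fs\oplus_\rho V\oplus\mathbb{K})\oplus\bigoplus_{j=1}^{r}\mathfrak{su}(1,2)$, exactly as in the construction preceding Theorem \ref{feriel}. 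A biholomorphism preserving $M$ is then determined by its jet as soon as its restriction to each decoupled block is, so the jet order of $M$ equals the maximum of the jet orders of the blocks; the sphere factors are $2$-jet (Chern--Moser) determined, while $M^{(0)}$ requires exactly $n$ jets by Theorem \ref{const} and Proposition \ref{int}, and since $2n-2\ge 2$ the maximum equals $n$. The sharpness clause ``and not less'' is witnessed by the automorphism in $V_{2n-2}$: its coefficients vanish to order exactly $n$, so its time-$t$ flow coincides with the identity to order $n-1$ without being the identity, showing that $(n-1)$-jets do not suffice.

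The step I expect to be the main obstacle is the one granted above: that adjoining the decoupled spheres does not enlarge the positive part of the Tanaka prolongation, i.e.\ that the prolongation of the direct-sum Levi--Tanaka algebra is the direct sum of the prolongations of the individual blocks together with the single grading element $\mathbb{K}$. The mechanism is the nondegeneracy condition $[X,\fg_{-1}]=0\Rightarrow X=0$: any grading-preserving, $J$-linear derivation sending one $\fg_{-1}$-block into another is forced to vanish, because the brackets between distinct blocks are zero while the target block has nondegenerate Levi form, so the $\fg_{-2}$-component of the derivation identity reads $0=[D X,Y]$ for all $Y$ in the target block. This kills all cross terms in $\fg_0$, and since $\fm$ is generated by $\fg_{-1}$ the same rigidity propagates to every positive degree, pinning the prolongation to the asserted direct sum; in particular its top positive weight stays the weighted degree $2n-2$ of $V_{2n-2}$. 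Everything else is routine given Theorem \ref{const} and Proposition \ref{int}.
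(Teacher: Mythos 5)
Your construction coincides with the paper's: the same two base quadrics (obtained from Theorem \ref{const} with $\frak{so}(n,n+2)$ and highest weight $\lambda_n+\lambda_{n+1}$ for odd $n$, and with $\frak{su}(m,m+1)$ and highest weight $\lambda_m+\lambda_{m+1}$ for $n=2m$), enlarged to codimension $k$ by adjoining decoupled sphere equations $\Imm\tilde w_j=\tilde z_j\overline{\tilde z_j}$, with the same dimension and codimension counts and the same witness for sharpness (the flow of the automorphism in $V_{2n-2}$). The only difference is that you make explicit why the Tanaka prolongation of the decoupled direct sum is $(\fs\oplus_\rho V\oplus\mathbb{K})\oplus\bigoplus_j\frak{su}(1,2)$ — a step the paper leaves implicit by analogy with Theorem \ref{feriel} — and your nondegeneracy argument killing the cross terms is the correct justification.
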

\begin{rem}
The CR dimension can clearly be arbitrarily enlarged by taking quadric in larger space, however, the jet determination of the examples remains still $n$.
\end{rem}


\begin{thebibliography}{12}
\itemsep=2pt

\bibitem{A}  D.V. Alekseevsky, C. Medori and A. Tomassini. {\it Maximally homogeneous para-CR manifolds}. Ann. Global Anal. Geom. 30 (2006), no. 1, 1--27. MR2249610, Zbl 1109.53030.

\bibitem{BER} M.S. Baouendi, P. Ebenfelt, L.P. Rothschild: \textit{Real Submanifolds in Complex Space and their Mappings}. Princeton University Press, 1999.

\bibitem{BER1} M.S. Baoudendi, P. Ebenfelt, L.P. Rothschild, {\it CR automorphisms of real analytic CR manifolds in complex space}, Comm. Anal. Geom. {\bf 6}  (1998), 291-315.
  
\bibitem{belold} V. K. Beloshapka, {\it Finite-dimensionality of the group of automorphisms of a real analytic surface}. (Russian) ; translated from Izv. Akad. Nauk SSSR Ser. Mat. 52 (1988), no. 2, 437--442, 448 Math. USSR-Izv. 32 (1989), no. 2, 443--448 MR0941685, Zbl 0663.32020.

\bibitem{Be2} V.K. Beloshapka: A uniqueness theorem for automorphisms of a nondegenerate surface in a complex space. (Russian) {\it Mat. Zametki} {\bf 47}(3) 17-22, 1990; transl. {\it Math. Notes} {\bf 47}(3) 239-243, 1990.

\bibitem{be-bl} F. Bertrand, L. Blanc-Centi, {\it Stationary holomorphic discs and finite jet determination problems},  
Math. Ann. {\bf 358} (2014), 477-509.

\bibitem{be-bl-me} F. Bertrand, L. Blanc-Centi, F. Meylan, {\it Stationary discs and finite jet determination for non-degenerate generic real submanifolds}
Adv. Math. {\bf 343} (2019), 910-934. 
\bibitem{be-me} F. Bertrand, F. Meylan, {\it Nondefective stationary discs and $2$-jet determination in higher codimension}, to appear in J.  Geom Anal.,  arXiv:1912.10034 (2020)

\bibitem{bl-me1} L. Blanc-Centi, F. Meylan, {\it On  nondegeneracy conditions  for the Levi map in higher codimension: a Survey}, Complex Anal. Synerg. {\bf 6}(2)  (2020).

\bibitem{Bl-Me} L. Blanc-Centi, F. Meylan: \textit {Chern-Moser operators and weighted jet determination problems in higher codimension}, preprint, arXiv:1712.00295.

\bibitem{BMR} M.S. Baouendi, N. Mir, L.P. Rothschild,    
 \textit{Reflection Ideals and mappings between generic submanifolds in complex space}, 
J. Geom. Anal.  \textbf{12} 
  (2002),   543-580.
  
%\bibitem{Bo} A. Boggess: \textit{CR Manifolds and the Tangential Cauchy-Riemann Complex}. CRC Press, 1991.
 \bibitem{Ca} Cartan, E:\textit { Sur la g\' eom\' etrie pseudo-conforme des hypersurfaces de deux variables complexes}  Ann.
Math. Pura Appl. 11 (1932), 17-90. 
\bibitem{CM} S.S. Chern, J.K. Moser: Real hypersurfaces in complex manifolds, {\it Acta Math.} {\bf 133}(3-4) 219-271 (1974).

\bibitem{DZ}B.Doubrov, I. Zelenko, {\it Vector distributions with very large symmetries via rational normal curves},arXiv:2004.07201 (2020).

\bibitem{eb} P. Ebenfelt, {\it Finite jet determination of holomorphic mappings at the boundary}, 
Asian J. Math. {\bf 5} (2001), 637-662.

\bibitem{eb-la}P. Ebenfelt, B. Lamel, {\it Finite jet determination of CR embeddings}, 
J. Geom. Anal. {\bf 14} (2004), 241-265.


\bibitem{eb-la-za} P. Ebenfelt, B. Lamel, D. Zaitsev, {\it Finite jet determination of local analytic CR automorphisms and their parametrization by 2-jets in the finite type case}, Geom. Funct. Anal.
 {\bf 13} (2003), 546-573.
 


\bibitem{FH} W. Fulton, J. Harris, {\it Representation theory. A first course}. Graduate Texts in  
Mathematics 129, Springer, 1991. 


\bibitem{G18} J. Gregorovic, {\it On the Beloshapka's rigidity conjecture for real submanifolds in complex space}, accepted in Journal of Differential geometry, preprint available on https://arxiv.org/abs/1807.03502


%\bibitem{Ho} L. H\"ormander: \textit{An Introduction to Complex Analysis in Several Variables}, Third Edition. North Holland, 1990.

\bibitem{ju}  R. Juhlin, {\it Determination of formal CR mappings by a finite jet}, Adv. Math. {\bf 222} (2009), 1611-1648.
 
\bibitem{ju-la} R. Juhlin,  B. Lamel, {\it Automorphism groups of minimal real-analytic CR manifolds},  J. Eur. Math. Soc. (JEMS) {\bf 15} (2013), 509-537.

\bibitem{ki-za} S.-Y. Kim,  D. Zaitsev, {\it Equivalence and embedding problems for CR-structures of any 
codimension}, Topology {\bf 44} (2005), 557-584.


\bibitem{KMZ} M. Kol\'a\v r, F.  Meylan, D. Zaitsev, {\it Chern-Moser operators and polynomial models in CR geometry},  Adv. Math.  \textbf{263} (2014), 321-356.


\bibitem{la-mi} B. Lamel,  N. Mir, {\it Finite jet determination of CR mappings}, Adv. Math. {\bf 216} (2007), 153-177.

\bibitem{M} C. Medori and M.  Nacinovich, {\it Levi-Tanaka algebras and homogeneous CR manifolds. Compositio Math}. 109 (1997), no. 2, 195--250. MR1478818, Zbl 0955.32029.

 \bibitem{MS} C. Medori and M. Nacinovich, {\it Classification of semisimple Levi-Tanaka algebras}. Ann. Mat. Pura Appl. (4) 174 (1998), 285--349. MR1746933, Zbl 0999.17039.
\bibitem{Me} F. Meylan, {\it A counterexample to the $2-$jet determination Chern-Moser Theorem in higher codimension}.  arXiv:2003.11783 (2020).
 
 \bibitem{mi-za}  N. Mir,  D. Zaitsev, {\it Unique jet determination and extension of germs of CR maps into spheres}, preprint.

%\bibitem{Ob} U. Oberst: Finite Dimensional Systems of Partial Differential or Difference Equations. {\it Adv. in Appl. Math.} {\bf 17} 337-356, 1996.
\bibitem{Ta} Tanaka, N. {\it On the pseudo-conformal geometry of hupersurfaces of the space of n complex variables}. J.
Math. Soc. Japan 14 (1962), 397-429.

\bibitem{T} N. Tanaka, {\it On differential systems, graded Lie algebras and pseudogroups}. J. Math. Kyoto Univ. 10 (1970), 1--82. MR0266258, Zbl 0206.50503.

%\bibitem{Tu} A. Tumanov: Extremal discs and the regularity of CR mappings in higher codimension. {\it Amer. J. Math.} {\bf 123} 445-473, 2001.
\bibitem{tu3} A. Tumanov, {\it Stationary Discs and finite jet determination for CR mappings in higher codimension}, Adv. Math. {\bf 371} (2020).


\bibitem{Za} D. Zaitsev: Germs of local automorphisms of real analytic CR structures and analytic dependence on the $k$-jets. {\it Math. Res. Lett.} {\bf 4}(6) 823-842, 1997.

\end{thebibliography}
\end{document}